\newtheoremstyle{localthm}
	{5pt} 
	{5pt} 
	{\sl} 
	{} 
	{\bf} 
	{{\rm.}} 
	{.7em} 
	{} 
\theoremstyle{localthm}
\newtheorem{Theorem}{Theorem}
\newtheorem{Proposition}[Theorem]{Proposition}
\newtheoremstyle{localrem}
	{5pt} 
	{5pt} 
	{\rm} 
	{} 
	{\bf} 
	{{\rm.}} 
	{.7em} 
	{} 
\theoremstyle{localrem}
\def\Pr{\mathop{\mathrm{I\!P}}\nolimits}
\newcommand{\Ex}{\mathop{\mathrm{I\!E}}\nolimits}
\newcommand{\Cov}{\mathop{\mathrm{Cov}}\nolimits}
\newcommand{\Var}{\mathop{\mathrm{Var}}\nolimits}
\newcommand{\R}{\mathbb{R}}
\newcommand{\EE}{\mathcal{E}}
\newcommand{\LL}{\mathcal{L}}
\newcommand{\NN}{\mathcal{N}}
\newcommand{\PP}{\mathcal{P}}
\newcommand{\QQ}{\mathcal{Q}}
\def\SS{\mathcal{S}}
\newcommand{\diag}{\mathop{\mathrm{diag}}\nolimits}
\newcommand{\tr}{\mathop{\mathrm{tr}}\nolimits}
\newcommand{\bs}{\boldsymbol}
\newcommand{\bSigma}{\bs{\Sigma}}
\newcommand{\Rqq}{\R_{}^{q\times q}}
\newcommand{\Rqqsym}{\R_{\rm sym}^{q\times q}}
\newcommand{\Rqqsympd}{\R_{{\rm sym},+}^{q\times q}}
\begin{document}

\title{Approximating Symmetrized Estimators of Scatter\\
	via Balanced Incomplete $U$-Statistics}
\author{Lutz D\"umbgen$^{*}$ and Klaus Nordhausen\\
	University of Bern and University of Jyv\"askyl\"a}
\date{}
\maketitle

\begin{abstract}
We derive limiting distributions of symmetrized est\-im\-ators of scatter, where instead of all $n(n-1)/2$ pairs of the $n$ observations we only consider $nd$ suitably chosen pairs, $1 \le d < \lfloor n/2\rfloor$. It turns out that the resulting estimators are asymptotically equivalent to the original one whenever $d = d(n) \to \infty$ at arbitrarily slow speed. We also investigate the asymptotic properties for arbitrary fixed $d$. These considerations and numerical examples indicate that for practical purposes, moderate fixed values of $d$ between $10$ and $20$ yield already estimators which are computationally feasible and rather close to the original ones.
\end{abstract}

\vfill

\noindent
$^*$Work supported by Swiss National Science Foundation.\\

\paragraph{AMS subject classifications:}
62H12, 65C60.

\paragraph{Key words:}
Asymptotic normality, Incomplete $U$-statistic, independent component analysis, linear expansion, $U$-statistic.

\paragraph{Corresponding author:}
Lutz D\"umbgen, e-mail: {\tt duembgen@stat.unibe.ch}

\addtolength{\baselineskip}{0.2\baselineskip}

\section{Introduction}
\label{sec:introduction}

Robust estimation of multivariate scatter for a distribution $P$ on $\R^q$, $q \ge 1$, is a recurring topic in statistics. For instance, different estimators of multivariate scatter are an important ingredient for independent component analysis (ICA) or invariant coordinate selection (ICS), see \cite{Nordhausen_etal_2008}, \cite{Tyler_etal_2009} and the references therein. Other potential applications are classification methods and multivariate regression, see for instance \cite{Nordhausen_Tyler_2015}. Of particular interest are symmetrized estimators of scatter which are defined in Section~\ref{sec:scatter.functionals}. Throughout this paper we consider independent random vectors $X_1, X_2, \ldots, X_n$ with distribution $P$. The symmetrized estimators are just standard functionals of scatter (with given center $0 \in \R^q$) applied to the empirical distribution
\[
	\hat{Q}_n \
	:= \ \binom{n}{2}^{-1} \sum_{1 \le i < j \le n} \delta_{X_j - X_i}^{\rm s} ,
\]
where $\delta_z^{\rm s} := 2^{-1} (\delta_z + \delta_{-z})$, and $\delta_z$ denotes Dirac measure at $z \in \R^q$. Thus, $\hat{Q}_n$ is the empirical distribution of all $n(n-1)$ differences of two different observations, and it may be viewed as a measure-valued version of a $U$-statistic as introduced by \cite{Hoeffding_1948}. It is an unbiased estimator of the symmetrized distribution
\begin{equation}
\label{eq:symmetrized.P}
	Q = Q(P) \ := \ \LL(X_1 - X_2) .
\end{equation}
Here and throughout, $\LL(\cdot)$ stands for `distribution of'. The computation of symmetrized $M$-estimators of scatter is rather time-consuming, whence some people refrain from using them. However, the symmetrized estimators have two desirable properties: one avoids the estimation of a location nuisance parameter, and the underlying scatter functional has the so-called block independence property as explained in Section~\ref{sec:scatter.functionals}; see also \cite{Duembgen_1998} and \cite{Sirkiae_etal_2007}.

To diminish the computational burden, one could replace the empirical distribution $\hat{Q}_n$ with the empirical distribution
\[
	\hat{Q}_{n,d} \
	:= \ (nd)^{-1} \sum_{i=1}^n \sum_{j=i+1}^{i+d}
		\delta_{X_j - X_i}^{\rm s}
\]
for some integer $1 \le d \le (n-1)/2$, where $X_{n+s} := X_s$ for $1 \le s \le n$. This is a measure-valued version of a reduced $U$-statistic as introduced by \cite{Blom_1976} and \cite{Brown_Kildea_1978}. Other authors, e.g.\ \cite{Lee_1990}, call this a balanced incomplete $U$-statistic. In the context of estimation of scatter, \cite{Miettinen_etal_2016} illustrate the potential benefits of $\hat{Q}_{n,d}$ compared to $\hat{Q}_n$ in simulations. As a preliminary proof of concept, they present the asymptotic properties of the estimator $2^{-1} \int_{\R^q} yy^\top \, \hat{Q}_{n,d}(dy)$ in comparison to the usual sample covariance matrix $2^{-1} \int_{\R^q} yy^\top \hat{Q}_n(dy)$. Their findings are encouraging, but the latter estimator can be computed rather easily in $O(n)$ steps and is non-robust of course.

The purpose of the present paper is to provide an in-depth analysis of robust and smooth symmetrized scatter estimators based on $\hat{Q}_n$ and $\hat{Q}_{n,d}$, where the computation time with $\hat{Q}_n$ can definitely become a limiting factor. It turns out that these two scatter estimators are asymptotically equivalent whenever $d = d(n) \to \infty$. Here and throughout the sequel, asymptotic statements are meant as $n \to \infty$. More precisely, if $\bSigma(\cdot)$ is our functional of scatter, then it will be shown that the following statements are true: There exist two stochastically independent and centered Gaussian random matrices $\bs{G}_1, \bs{G}_2$ whose distribution depends only on $P$ and $\bSigma(\cdot)$ such that
\[
	\sqrt{n} \bigl( \bSigma(\hat{Q}_{n,d(n)}) - \bSigma(Q) \bigr)
	\ = \ \sqrt{n} \bigl( \bSigma(\hat{Q}_n) - \bSigma(Q) \bigr) + o_p(1)
	\ \to_{\LL} \ \bs{G}_1
\]
provided that $d(n) \to \infty$. Here `$o_p(1)$' denotes a random term converging to zero in probability, and `$\to_{\LL}$' denotes convergence in distribution. For any fixed integer $d \ge 1$,
\[
	\sqrt{n} \bigl( \bSigma(\hat{Q}_{n,d}) - \bSigma(Q) \bigr)
	\ \to_{\LL} \ \bs{G}_1 + d^{-1/2} \bs{G}_2 .
\]
This explains why for sufficiently large but fixed $d$, the estimator $\bSigma(\hat{Q}_{n,d})$ is a good surrogate for $\bSigma(\hat{Q}_n)$.

An easy way to compute $\bSigma(\hat{Q}_{n,d})$ is to generate a data matrix containing the $nd$ differences $X_j - X_i$, where $i \in \{1,\ldots,n\}$ and $j \in \{i+1,\ldots,i+d\}$, and to apply $\bSigma(\cdot)$ to the empirical distribution $(nd)^{-1} \sum_{k=1}^{nd} \delta_{Y_k}^{\rm s}$ of these $nd$ vectors $Y_k$. But for large values of $nd$, this may be too cumbersome. A possible alternative is to compute the average $d^{-1} \sum_{\ell=1}^d \bSigma(\hat{Q}_{n,1}^{(\ell)})$ with the same $d \ge 1$, where $\hat{Q}_{n,1}^{(1)}, \ldots, \hat{Q}_{n,1}^{(d)}$ are defined as $\hat{Q}_{n,1}$, but with $d$ random permutations of the observations $X_1,X_2, \ldots,X_n$. It turns out that for fixed $d$, this average has the same asymptotic distribution as $\bSigma(\hat{Q}_{n,d})$.

The remainder of this paper is organized as follows: In Section~\ref{sec:scatter.functionals}, we recall some basic facts about scatter functionals and symmetrized scatter functionals as presented by \cite{Duembgen_etal_2015}. In Section~\ref{sec:asymptotics}, the asymptotic results mentioned before are stated in detail. The theory is illustrated with numerical examples in Section~\ref{sec:examples}. All proofs are deferred to Section~\ref{sec:proofs} and Appendix~A. The starting point is standard theory for complete and incomplete $U$-statistics as presented, for instance, by \cite{Serfling_1980} and \cite{Lee_1990}. Suitable modifications of these results, combined with linear expansions for functionals of scatter yield the asymptotic distributions of $\bSigma(\hat{Q}_n)$ and $\bSigma(\hat{Q}_{n,d})$. For the averaging estimator $d^{-1} \sum_{\ell=1}^d \bSigma(\hat{Q}_{n,1}^{(\ell)})$, we derive and use a variation of the combinatorial central limit theorem of \cite{Hoeffding_1951}. This result is potentially of independent interest, for instance, in the context of kernel mean embeddings as used in machine learning \citep{Muandet_etal_2020}.

\section{Functionals of Scatter}
\label{sec:scatter.functionals}

The material in this section is adapted from the survey of \cite{Duembgen_etal_2015}, where the latter builds on previous work of \cite{Tyler_1987a}, \cite{Kent_Tyler_1991} and \cite{Dudley_etal_2009}.

The space of symmetric matrices in $\R^{q\times q}$ is denoted by $\Rqqsym$, and $\Rqqsympd$ stands for its subset of positive definite matrices. The identity matrix in $\Rqq$ is written as $I_q$. The Euclidean norm of a vector $v \in \R^q$ is denoted by $\|v\| = \sqrt{v^\top v}$. For matrices $M, N$ with identical dimensions we write
\[
	\langle M, N\rangle \ := \ \tr(M^\top N)
	\quad\text{and}\quad
	\|M\| \ := \ \sqrt{\langle M, M\rangle} ,
\]
so $\|M\|$ is the Frobenius norm of $M$.

\subsection{Functionals of scatter for centered distributions}

Let $\QQ$ be a given family of probability distributions on $\R^q$ which are viewed as centered around $0$. In our specific applications, this is plausible, because $\QQ$ consists of symmetrized distributions. We consider a function $\bSigma : \QQ \to \Rqqsympd$, called a functional of scatter, and $\bSigma(Q)$ is the scatter matrix of $Q \in \QQ$. For the general theory presented in the next section, we assume that $\QQ$ and $\bSigma$ have two important properties.

\paragraph{Linear equivariance.}
We assume that for any nonsingular matrix $B \in \Rqq$ and any distribution $Q \in \QQ$, the distribution $Q^B := \LL(BY)$ with $Y \sim Q$ belongs to $\QQ$ too, and that
\begin{equation}
\label{eq:equivariance}
	\bSigma(Q^B) \ = \ B \bSigma(Q) B^\top .
\end{equation}

Linear equivariance has some interesting implications. For instance, if $Q \in \QQ$ is spherically symmetric in the sense that $Q^B = Q$ for all orthogonal matrices $B \in \Rqq$, then $\bSigma = c I_q$ for some $c > 0$. Furthermore, if $Q^B = Q$ for some matrix $B = \diag(\xi_1,\ldots,\xi_q)$ with $\xi \in \{-1,1\}^q$, then for arbitrary different indices $i,j \in \{1,\ldots,q\}$, the $(i,j)$-th component of $\bSigma(Q)$ satisfies
\begin{equation}
\label{eq:sign.symmetry}
	\bSigma(Q)_{ij} \ = \ 0 \quad\text{whenever} \ \xi_i \ne \xi_j .
\end{equation}

\paragraph{Differentiability.}
We assume that $\QQ$ is an open subset of the family of all probability distributions on $\R^q$ in the topology of weak convergence. Moreover, for any distribution $Q \in \QQ$, there exists a bounded, measurable and even function $J = J_Q : \R^q \to \Rqqsym$ such that $\int_{\R^q} J \, dQ = 0$, and for other distributions $\check{Q} \in \QQ$,
\[
	\bSigma(\check{Q}) \ = \ \bSigma(Q) + \int_{\R^q} J \, d\check{Q}
		+ o \Bigl( \Bigl\| \int_{\R^q} J \, d\check{Q} \Bigr\| \Bigr)
\]
as $\check{Q} \to Q$ weakly. Note that this differentiability property of $\bSigma(\cdot)$ implies its robustness in the sense that $\bSigma(\check{Q}) \to \bSigma(Q)$ as $\check{Q} \to Q$ weakly, because then $\int_{\R^q} J \, d\check{Q} \to \int_{\R^q} J \, dQ = 0$.

\paragraph{$M$-functionals of scatter.}
An important example for $\bSigma$ are $M$-functionals of scatter, driven by a function $\rho : [0,\infty) \to \R$ with the following properties: $\rho$ is twice continuously differentiable such that $\psi(s) := s \rho'(s)$ satisfies the inequalities $\psi'(s) > 0$ for $s > 0$ and $q < \psi(\infty) := \lim_{s \to \infty} \psi(s) < \infty$. For any distribution $Q$ on $\R^q$ and $\Sigma \in \Rqqsympd$, let
\begin{equation}
\label{eq:L.rho}
	L_\rho(\Sigma,Q)
	\ := \ \int_{\R^q} \bigl[ \rho(y^\top \Sigma^{-1}y) - \rho(y^\top y) \bigr] \, Q(dy)
		+ \log \det(\Sigma) .
\end{equation}
The function $L_\rho(\cdot,Q)$ has a unique minimizer $\bSigma(Q)$ on $\Rqqsympd$ if and only if
\[
	Q(\mathbb{W}) \ < \ \frac{\psi(\infty) - q + \dim(\mathbb{W})}{\psi(\infty)}
\]
for any linear subspace $\mathbb{W}$ of $\R^q$ with $0 \le \dim(\mathbb{W}) < q$. The set $\QQ$ of distributions which satisfy the latter constraints is open with respect to weak convergence.

A particular example for a function $\rho$ with the stated properties is given by $\rho(s) = \rho_\nu(s) := (\nu + q) \log(s + \nu)$, where $\nu > 0$.

The function $J = J_Q$ is rather complicated in general. But in case of a spherically symmetric distribution $Q$ with $\bSigma(Q) = I_q$,
\[
	J(y) \
	= \  \frac{q+2}{q+2+2\kappa} \, \rho'(\|y\|^2) \Bigl( yy^\top - \frac{\|y\|^2}{q} I_q \Bigr)
		+ \frac{1}{1 + \kappa} \Bigl( \rho'(\|y\|^2) \frac{\|y\|^2}{q} - 1 \Bigr) I_q
\]
for $y \in \R^q$, where $\kappa := q^{-1} \int_{\R^q} \rho''(\|y\|^2) \|y\|^4 \, Q(dy) \in (-1,\infty)$.

\subsection{Tyler's (1987) functional of scatter}

For any distribution $Q$ on $\R^q$ such that $Q(\{0\}) = 0$ and $\Sigma \in \Rqqsympd$, let
\[
	L_0(\Sigma,Q)
	\ := \ q \int_{\R^q} \log \Bigl( \frac{y^\top \Sigma^{-1}y}{y^\top y} \Bigr) \, Q(dy)
		+ \log \det(\Sigma) .
\]
Note that $L_0(t\Sigma,Q) = L_0(\Sigma,Q)$ for all $t > 0$. The function $L_0(\cdot,Q)$ has a unique minimizer $\bSigma_0(Q)$ on the set $\bigl\{ \Sigma \in \Rqqsympd : \det(\Sigma) = 1 \bigr\}$ if and only if
\[
	Q(\mathbb{W}) \ < \ \frac{\dim(\mathbb{W})}{q}
\]
for any linear subspace $\mathbb{W}$ of $\R^q$ with $1 \le \dim(\mathbb{W}) < q$. The set of all distributions $Q$ which satisfy the latter constraints and $Q(\{0\}) = 0$ is denoted by $\QQ_0$.

The functional $\bSigma_0$ satisfies a restricted equivariance property: For any $Q \in \QQ_0$ and any nonsingular matrix $B \in \Rqq$ with $|\det(B)| = 1$, equation~\eqref{eq:equivariance} holds true with $\bSigma_0$ in place of $\bSigma$. This implies that $\bSigma_0(Q) = I_q$ if $Q$ is spherically symmetric. Moreover, if $Q^B = Q$ with $B = \diag(\xi_1,\ldots,\xi_q)$ and $\xi \in \{-1,1\}^q$, then \eqref{eq:sign.symmetry} is satisfied with $\bSigma_0$ in place of $\bSigma$.

The functional $\bSigma_0$ is also differentiable in the following sense: For any distribution $Q \in \QQ_0$ there exists a bounded, continuous and even function $J : \R^q \setminus \{0\} \to \Rqqsym$ such that $\int_{\R^q} J \, dQ = 0$, $\mathrm{trace} \bigl( \bSigma_0(Q)^{-1} J \bigr) \equiv 0$, and for any distribution $\check{Q} \in \QQ$,
\[
	\bSigma_0(\check{Q}) \ = \ \bSigma_0(Q) + \int_{\R^q} J \, d\check{Q}
		+ o \Bigl( \Bigl\| \int_{\R^q} J \, d\check{Q} \Bigr\| \Bigr)
\]
as $\check{Q} \to Q$ weakly. Again, the function $J = J_Q$ is rather complicated in general. But in case of a spherically symmetric distribution $Q \in \QQ_0$,
\[
	J(y) \ = \ (q+2) \bigl( \|y\|^{-2} yy^\top - q^{-1} I_q \bigr) ,
	\quad y \in \R^q \setminus \{0\} .
\]

\subsection{Symmetrized $M$-functionals of scatter}
\label{subsec:Symmetrization}

Now we consider a general distribution $P$ on $\R^q$ and want to define its scatter matrix without having to specify a center of $P$. To this end we consider the symmetrized distribution $Q = Q(P)$ as defined in \eqref{eq:symmetrized.P}. Then the symmetrized version of the functional of scatter $\bSigma$ is given by
\[
	\bSigma^{\rm s}(P) \ := \ \bSigma(Q(P)) .
\]
Here we assume that $P$ belongs to the family $\PP$ of all probability distributions on $\R^q$ such that $Q(P) \in \QQ$. In case of an $M$-functional $\bSigma$ with underlying function $\rho$, a sufficient condition for $P \in \PP$ is that
\begin{equation}
\label{eq:P.0.on.hyperplanes}
	P(H) \ = \ 0
	\quad\text{for any hyperplane} \ H \subset \R^q .
\end{equation} 

Analogously, one may define the symmetrized version of Tyler's functional $\bSigma_0$ via $\bSigma_0^{\rm s}(P) := \bSigma_0(Q(P))$, where we assume that $P$ belongs to the family $\PP_0$ of all probability distributions on $\R^p$ such that $Q(P) \in \QQ_0$. Again, condition~\eqref{eq:P.0.on.hyperplanes} is sufficient for that.

As to the benefits of symmetrization, suppose that $P$ is elliptically symmetric with unknown center $\mu_* \in \R^q$ and unknown scatter matrix $\Sigma_* \in \Rqqsympd$. That means, the distribution of $\Sigma_*^{-1/2}(X_1 - \mu_*)$ is spherically symmetric. Then $Q(P)$ is elliptically symmetric with center $0$ and the same scatter matrix $\Sigma_*$. Note that $\Sigma_*$ is defined only up to positive multiples. This is no problem as long as one is mainly interested in the shape matrix $\mathrm{shape}(\Sigma_*)$, where
\[
	\mathrm{shape}(\Sigma) \ := \ \det(\Sigma)_{}^{-1/q} \, \Sigma
\]
for $\Sigma \in \Rqqsympd$, that is, $\mathrm{shape}(\Sigma)$ is a positive multiple of $\Sigma$ with determinant one. For instance, in connection with principal components, regression coefficients and correlation measures, multiplying $\Sigma_*$ with a positive scalar has no impact. Our specific choice of $\mathrm{shape}(\Sigma)$ is justified by \cite{Paindaveine_2008}.

Symmetrization has a second, even more important advantage: Consider an arbitrary distribution $P$, not necessarily symmetric in any sense. Suppose that a random vector $X \sim P$ may be written as $X = [X_{\rm a}^\top, X_{\rm b}^\top]^\top$ with independent subvectors $X_{\rm a} \in \R^{q({\rm a})}$ and $X_{\rm b} \in \R^{q({\rm b})}$. Then $\bSigma^{\rm s}(P)$ is block-diagonal in the sense that
\[
	\bSigma^{\rm s}(P) \ = \ \begin{bmatrix}
		\bSigma_{\rm a}(P) & \bs{0} \\
		\bs{0} & \bs{\Sigma}_{\rm b}(P)
	\end{bmatrix}
\]
with certain matrices $\bs{\Sigma}_{\rm a}(P) \in \R_{\rm sym,+}^{q({\rm a})\times q({\rm a})}$ and $\bs{\Sigma}_{\rm b}(P) \in \R_{\rm sym,+}^{q({\rm b})\times q({\rm b})}$.

At this point, it is not clear whether $\hat{Q}_n$ or $\hat{Q}_{n,d}$ belongs to $\QQ$. As explained in Section~\ref{sec:proofs}, $\hat{Q}_n$ and $\hat{Q}_{n,d}$ converge weakly in probability to $Q$, uniformly in $1 \le d \le (n-1)/2$. Thus, $\Pr(\hat{Q}_n \in \QQ)$ and $\min_{1 \le d \le (n-1)/2} \Pr(\hat{Q}_{n,d} \in \QQ)$ converge to $1$. The same conclusion is true for $(\bSigma_0, \QQ_0)$ in place of $(\bSigma,\QQ)$, if we assume that $P$ has no atoms, that is, if $P(\{x\}) = 0$ for any $x \in \R^q$. Here is also a non-asymptotic result for $M$-estimators of scatter in case of smooth distributions $P$:

\begin{Proposition}
\label{prop:Qhat.in.QQ}
Suppose that $P$ satisfies \eqref{eq:P.0.on.hyperplanes}. With probability one, $\hat{Q}_n(\{0\}) = \hat{Q}_{n,d}(\{0\}) = 0$, and in the case of $n > q$,
\[
	\hat{Q}_n(\mathbb{W}), \hat{Q}_{n,d}(\mathbb{W}) \ < \ \frac{\dim(\mathbb{W})}{q}
\]
for arbitrary linear subspaces $\mathbb{W}$ of $\R^q$ with $1 \le \dim(\mathbb{W}) < q$ and $1 \le d \le (n-1)/2$.
\end{Proposition}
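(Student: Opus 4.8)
The plan is to reduce everything to a single almost-sure ``general position'' event for the data and then to two deterministic counting arguments, one for each estimator. Since $P$ charges no hyperplane it charges no point, so $P$ is non-atomic; hence with probability one the $X_i$ are pairwise distinct and every difference $X_j-X_i$ with $i\ne j$ is nonzero. As $d\le(n-1)/2<n$, all pairs entering $\hat{Q}_n$ and $\hat{Q}_{n,d}$ use distinct indices, which yields $\hat{Q}_n(\{0\})=\hat{Q}_{n,d}(\{0\})=0$. For the subspace bounds, note that a linear subspace $\mathbb{W}$ is symmetric, so $\delta_z^{\rm s}(\mathbb{W})=\mathbf{1}\{z\in\mathbb{W}\}$; thus both $\hat{Q}_n(\mathbb{W})$ and $\hat{Q}_{n,d}(\mathbb{W})$ are proportions of index pairs $(i,j)$ with $X_j-X_i\in\mathbb{W}$. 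This is an equivalence relation, partitioning $\{1,\dots,n\}$ into classes of sizes $n_1,\dots,n_m$; one has $\hat{Q}_n(\mathbb{W})=\binom{n}{2}^{-1}\sum_k\binom{n_k}{2}$, while $\hat{Q}_{n,d}(\mathbb{W})=(nd)^{-1}\sum_k B_k$, where $B_k$ counts the within-class \emph{band} pairs, i.e.\ those with $(j-i)\bmod n\in\{1,\dots,d\}$.

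The crucial point is uniformity over the uncountable family of subspaces $\mathbb{W}$, and I would handle it through one event that does not depend on $\mathbb{W}$. Call the data in \emph{general position} if, for every forest $F$ on $\{1,\dots,n\}$ with at most $q$ edges, the difference vectors $\{X_j-X_i:\{i,j\}\in E(F)\}$ are linearly independent. This holds almost surely: order the edges of a fixed $F$ so that each new edge attaches a new vertex $b$; conditionally on the earlier points, $X_b$ is a fresh independent observation, and $X_b-X_a$ lies in the span of its predecessors only if $X_b$ falls in an affine set of dimension $<q$, hence in a $P$-null hyperplane. A finite union over the finitely many forests finishes the claim. On this event, fix any $\mathbb{W}$ with $w:=\dim\mathbb{W}\in\{1,\dots,q-1\}$: choosing a spanning tree in each class yields a forest with $\sum_k(n_k-1)$ edges whose vectors all lie in $\mathbb{W}$; were $\sum_k(n_k-1)\ge w+1$, a sub-forest of exactly $w+1\le q$ edges would be independent yet contained in the $w$-dimensional space $\mathbb{W}$, a contradiction. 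Hence $\sum_k(n_k-1)\le w$ for \emph{every} $\mathbb{W}$ simultaneously; in particular no class is all of $\{1,\dots,n\}$, since $n>q\ge w+1$.

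It remains to convert $\sum_k(n_k-1)\le w$ into the two stated inequalities. For $\hat{Q}_n$, the bound $\binom{n_k}{2}\le\tfrac{1}{2}(w+1)(n_k-1)$ (valid as $n_k\le w+1$) gives $\sum_k\binom{n_k}{2}\le\binom{w+1}{2}$, so $\hat{Q}_n(\mathbb{W})\le w(w+1)/[n(n-1)]$, which is $<w/q$ because $q(w+1)\le q^2<q(q+1)\le n(n-1)$ (using $w\le q-1$ and $n\ge q+1$). For $\hat{Q}_{n,d}$ I would first establish the combinatorial lemma that for any proper subset $C\subsetneq\mathbb{Z}/n$ of size $n_k$ and any $1\le d\le(n-1)/2$, the number of band pairs inside $C$ satisfies $B_k\le d(n_k-1)$; granting this, $\sum_k B_k\le d\sum_k(n_k-1)\le dw$, whence $\hat{Q}_{n,d}(\mathbb{W})\le dw/(nd)=w/n<w/q$. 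Since the generic event is fixed once and for all, and there are only finitely many $d$, all inequalities hold simultaneously with probability one.

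The main obstacle is twofold. Conceptually, the delicate part is the uniformity over all subspaces $\mathbb{W}$ at once; the device that makes it work is isolating a single $\mathbb{W}$-free linear-independence event for forests, after which $\sum_k(n_k-1)\le w$ becomes a purely deterministic consequence valid for every $\mathbb{W}$. Technically, the band lemma $B_k\le d(n_k-1)$ is the one genuine computation: it really uses the cyclic geometry and the hypothesis $d\le(n-1)/2$, with the extremal configuration being a near-complete arc (the bound being attained with equality at $n_k=n-1$), and its only subtlety is controlling the pairs created by cyclic wrap-around. Everything else is bookkeeping.
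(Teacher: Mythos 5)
Your proposal follows essentially the same route as the paper: a single almost-sure general-position event that does not depend on $\mathbb{W}$ (your forest-independence event is the paper's Step~2 in different clothing), the deterministic consequence $\sum_k(n_k-1)\le w:=\dim\mathbb{W}$ valid for every subspace simultaneously, and then the two counting bounds $\sum_k\binom{n_k}{2}\le\binom{w+1}{2}$ and $\sum_k B_k\le dw$, which are exactly the content of the paper's Step~3, followed by the same comparison with $w/q$ using $n>q$. Your repackaging of the relation $X_j-X_i\in\mathbb{W}$ as an equivalence relation (so the relevant graph is a disjoint union of cliques rather than merely a union of connected components) and the one-line bound $\binom{n_k}{2}\le\tfrac12(w+1)(n_k-1)$ are marginally cleaner than the paper's injection argument, but they buy nothing essentially new. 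The one substantive omission is that you assert rather than prove the band lemma $B_k\le d(n_k-1)$ for proper subsets $C\subsetneq\mathbb{Z}/n$; that is precisely the computation occupying most of the paper's Step~3 (reduce by a cyclic shift to the case $n\notin C$, then bound separately the pairs with $i_b-i_a\le d$ and those with $i_b-i_a\ge n-d$; the two counts sum to at most $(n_k-1)d$). The lemma is true, and your formulation of it --- including the hypothesis $C\ne\mathbb{Z}/n$, the role of $d\le(n-1)/2$, and the extremal case $n_k=n-1$ --- is the correct one, so your plan goes through once that computation is supplied.
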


This theorem implies that for the $M$-functional $\bSigma(\cdot)$, the symmetrized $M$-estimators $\bSigma(\hat{Q}_n)$ and $\bSigma(\hat{Q}_{n,d})$ are well-defined almost surely for $1 \le d \le (n-1)/2$, provided that $n > q$ and $P$ satisfies \eqref{eq:P.0.on.hyperplanes}. The same conclusion is true for Tyler's $M$-functional $\bSigma_0(\cdot)$ in place of $\bSigma(\cdot)$.

\section{Asymptotic Expansions and Distributions}
\label{sec:asymptotics}

In what follows, $\bSigma(\cdot)$ denotes either a linear equivariant and differentiable scatter functional or Tyler's functional $\bSigma_0(\cdot)$. In addition to $\hat{Q}_n$ and $\hat{Q}_{n,d}$, we consider the usual empirical distribution of the observations $X_i$,
\[
	\hat{P}_n \ := \ n^{-1} \sum_{i=1}^n \delta_{X_i}^{} .
\]

\begin{Theorem}
\label{thm:asymptotics}
Suppose that $\bSigma(Q)$ is well-defined for $Q = Q(P)$. With $J = J_Q$, define
\[
	H_1(x) \ := \ \Ex J(x - X_1)
	\quad\text{and}\quad
	H_2(x,y) \ := \ J(x - y) - H_1(x) - H_1(y)
\]
for $x, y \in \R^q$, where $J(0) := 0$ in connection with Tyler's functional. Let $\bs{G}_1$ and $\bs{G}_2$ be two stochastically independent Gaussian random matrices in $\Rqqsym$ such that $\Ex \bs{G}_1 = \Ex \bs{G}_2 = 0$, and
\begin{align*}
	\Ex \bigl( \langle A, \bs{G}_1\rangle^2 \bigr) \
	&= \ \Ex \bigl( \langle A, H_1(X_1) \rangle^2 \bigr) , \\
	\Ex \bigl( \langle A, \bs{G}_2\rangle^2 \bigr) \
	&= \ \Ex \bigl( \langle A, H_2(X_1,X_2) \rangle^2 \bigr)
\end{align*}
for all matrices $A \in \Rqqsym$. If $(n-1)/2 \ge d(n) \to \infty$, then
\[
	\left.\begin{array}{l}
		\bSigma(\hat{Q}_n) \\[1ex]
		\bSigma(\hat{Q}_{n,d(n)})
	\end{array} \right\} \
	= \ \bSigma(Q) + 2 \int_{\R^q} H_1 \, d\hat{P}_n + o_p(n^{-1/2}) .
\]
For fixed integers $d \ge 1$,
\[
	\bSigma(\hat{Q}_{n,d}) \ = \ \bSigma(\hat{Q}_n) + \bs{M}_{n,d} + o_p(n^{-1/2}) ,
\]
where
\[
	\bs{M}_{n,d} \ := \ (nd)^{-1} \sum_{i=1}^n \sum_{j=i+1}^{i+d}
		H_2(X_i,X_j) .
\]
Moreover,
\[
	\Bigl( \sqrt{n} \int_{\R^q} H_1 \, d\hat{P}_n, \, \sqrt{nd} \, \bs{M}_{n,d} \Bigr)
	\ \to_{\LL}^{} \ (\bs{G}_1, \bs{G}_2) .
\]
In particular, as $d(n) \to \infty$,
\[
	\left.\begin{array}{l}
		\sqrt{n} \bigl( \bSigma(\hat{Q}_n) - \bSigma(Q) \bigr) \\[1ex]
		\sqrt{n} \bigl( \bSigma(\hat{Q}_{n,d(n)}) - \bSigma(Q) \bigr)
	\end{array} \right\} \
	\to_{\LL}^{} \ 2 \bs{G}_1 ,
\]
whereas for fixed integers $d \ge 1$,
\[
	\sqrt{n} \bigl( \bSigma(\hat{Q}_{n,d}) - \bSigma(Q) \bigr) \
	\to_{\LL}^{} \ 2 \bs{G}_1 + d^{-1/2} \bs{G}_2 .
\]
\end{Theorem}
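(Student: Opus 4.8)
The plan is to reduce the entire statement to the analysis of the (complete and balanced–incomplete) $U$-statistics $\int J\,d\hat{Q}_n$ and $\int J\,d\hat{Q}_{n,d}$, and only then to invoke a central limit theorem. Since the text guarantees that $\hat{Q}_n$ and $\hat{Q}_{n,d}$ converge weakly in probability to $Q$ (uniformly in $d$) and $Q$ is an interior point of $\QQ$, the differentiability property of $\bSigma$ applies on an event of probability tending to one, giving $\bSigma(\hat{Q}_n) = \bSigma(Q) + \int J\,d\hat{Q}_n + o\bigl(\|\int J\,d\hat{Q}_n\|\bigr)$ and the analogous expansion for $\hat{Q}_{n,d}$. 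A routine $\varepsilon$–$\delta$ argument converts the deterministic $o(\cdot)$ into an $o_p\bigl(\|\cdot\|\bigr)$ term. Hence, once I show that both $U$-statistics are $O_p(n^{-1/2})$ with the stated linear approximations, every remainder is $o_p(n^{-1/2})$ and the matrix expansions follow.

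Next I would exploit the Hoeffding decomposition. Because $J$ is even and $\int J\,dQ = 0$, the kernel $h(x,y) := J(x-y)$ is symmetric with mean zero, Hájek projection $H_1$, and degenerate remainder $H_2$. For the complete statistic this yields $\int J\,d\hat{Q}_n = 2\int H_1\,d\hat{P}_n + R_n$ with $R_n := \binom{n}{2}^{-1}\sum_{i<j} H_2(X_i,X_j)$ a degenerate $U$-statistic whose variance is $O(n^{-2})$, so $R_n = o_p(n^{-1/2})$. For the incomplete statistic I substitute $J(X_j-X_i) = H_1(X_i) + H_1(X_j) + H_2(X_i,X_j)$; the \emph{crucial} combinatorial point is that in the balanced design (with $d<n/2$) each index occurs in exactly $2d$ pairs, so the linear contributions collapse to exactly $2\int H_1\,d\hat{P}_n$, identical to the complete case, and $\int J\,d\hat{Q}_{n,d} = 2\int H_1\,d\hat{P}_n + \bs{M}_{n,d}$. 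A variance computation using degeneracy (cross-covariances vanish, only the $nd$ diagonal terms survive) gives $\Var(\bs{M}_{n,d}) = O((nd)^{-1})$, i.e.\ $\bs{M}_{n,d} = O_p((nd)^{-1/2})$. This delivers the first displayed expansion when $d(n)\to\infty$ (then $\bs{M}_{n,d(n)} = o_p(n^{-1/2})$) and the fixed-$d$ expansion after subtracting the two representations and absorbing $R_n$.

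The analytic core is the joint convergence. I would write $\sqrt{n}\int H_1\,d\hat{P}_n = n^{-1/2}\sum_j H_1(X_j)$ and $\sqrt{nd}\,\bs{M}_{n,d} = (nd)^{-1/2}\sum_j D_j$ with $D_j := \sum_{k=1}^d H_2(X_{j-k},X_j)$, discarding the $O(d^2)$ cyclic wrap-around pairs (negligible after normalization for fixed $d$). Relative to the filtration $\mathcal{F}_j = \sigma(X_1,\dots,X_j)$, both $H_1(X_j)$ and $D_j$ are martingale differences, the latter by degeneracy of $H_2$ in its second argument, so the stacked increments form a martingale-difference array to which I apply a multivariate martingale CLT. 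Its conditional-variance condition reduces to a law of large numbers for bounded functions of the stationary $d$-dependent blocks $(X_{j-d},\dots,X_j)$, producing exactly the covariances prescribed for $\bs{G}_1$ and $\bs{G}_2$; the conditional Lindeberg condition is automatic since $J$, and hence every increment, is bounded. Independence of $\bs{G}_1$ and $\bs{G}_2$ follows from the vanishing of the cross conditional covariance, which is again a consequence of $\Ex[H_2(x,X_j)\mid X_j]\equiv 0$.

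The concluding limit laws then follow by Slutsky's lemma and the continuous mapping theorem: for $d(n)\to\infty$ only the $2\bs{G}_1$ term survives, whereas for fixed $d$ one writes $\sqrt{n}\,\bs{M}_{n,d} = d^{-1/2}\sqrt{nd}\,\bs{M}_{n,d}$ and adds the two jointly convergent, asymptotically independent summands. I expect the \emph{main obstacle} to be precisely the degenerate incomplete $U$-statistic CLT underlying the joint convergence: a complete degenerate $U$-statistic has a non-Gaussian (Gaussian-chaos) limit, and Gaussianity is recovered here only because $O(n)$ of the $\binom{n}{2}$ pairs are retained. Making this rigorous—fixing the right martingale ordering, controlling the circulant dependence and the cyclic wrap-around, and simultaneously extracting asymptotic independence from the linear part—is the delicate step, while the reduction to $U$-statistics and the variance bookkeeping are routine.
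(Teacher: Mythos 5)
Your proposal is correct, and its skeleton is the same as the paper's: the differentiability expansion applied on an event of probability tending to one (with the $\varepsilon$--$\delta$ upgrade of $o(\cdot)$ to $o_p(\cdot)$), the Hoeffding decomposition with the observation that the balanced design makes the linear part of $\int J\,d\hat{Q}_{n,d}$ collapse to exactly $2\int H_1\,d\hat{P}_n$, the degeneracy-based variance bounds $\Var(\bs{M}_{n,d})=O\bigl((nd)^{-1}\bigr)$ and $R_n=O_p(n^{-1})$, and the concluding Slutsky/continuous-mapping step. Where you genuinely diverge is in the proof of the key lemma, the joint CLT for $\bigl(\sqrt{n}\int H_1\,d\hat{P}_n,\ \sqrt{nd}\,\bs{M}_{n,d}\bigr)$ (the paper's Proposition~\ref{prop:asymptotics.incomplete.balanced.U}). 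The paper uses the classical $m$-dependence blocking trick: indices are grouped into blocks of length $k$, only within-block pairs are kept so that the block sums $(Y_\ell^k,Z_\ell^k)$ are i.i.d.\ with exactly zero cross-covariance, the i.i.d.\ multivariate CLT applies, and then $k\to\infty$ with $L^2$ control of the discarded terms. You instead order the retained pairs by their largest index, note that $D_j:=\sum_{k=1}^d H_2(X_{j-k},X_j)$ is a martingale difference with respect to $\mathcal{F}_j=\sigma(X_1,\ldots,X_j)$ because $\Ex H_2(x,X_1)\equiv 0$, and invoke a multivariate martingale CLT (Lindeberg being trivial since $J$ is bounded). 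Both routes are rigorous; the blocking argument buys exact independence at the pre-limit stage, so asymptotic independence of $\bs{G}_1$ and $\bs{G}_2$ is immediate, while your route avoids the double limit $n\to\infty$, $k\to\infty$ at the price of a genuine conditional-variance verification.

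That verification contains the one imprecision you should repair. The cross conditional covariance does \emph{not} vanish identically, and the justification you give ($\Ex[H_2(x,X_j)\mid X_j]\equiv 0$) is garbled, since $H_2(x,X_j)$ is $X_j$-measurable. What is actually true is
\[
	\Ex \bigl[ H_1(X_j) D_j^\top \bigm| \mathcal{F}_{j-1} \bigr]
	\ = \ \sum_{k=1}^d h(X_{j-k}),
	\qquad
	h(x) \ := \ \Ex \bigl[ H_1(X_1) H_2(x,X_1)^\top \bigr] ,
\]
and $h(x)\ne 0$ for general $x$. The saving fact is $\Ex h(X_1)=0$ (condition on the second argument of $H_2$ and use $\Ex H_2(X_1,y)\equiv 0$, i.e.\ Hoeffding orthogonality), so the summed, normalized cross term behaves like $(\sqrt{d}/n)\sum_i h(X_i)$ and tends to zero in probability by the law of large numbers for the bounded, centered $h(X_i)$ --- asymptotic, not pointwise, vanishing. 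The same care is needed in the $D_j$-block of the conditional variance: the diagonal terms $k=l$ give the covariance of $\bs{G}_2$ by the LLN, while the off-diagonal terms $k\ne l$ are centered by degeneracy and vanish in $L^2$. With these repairs your argument is complete.
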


It remains to explain the asymptotic properties of the alternative estimator $d^{-1} \sum_{\ell=1}^d \bSigma(\hat{Q}_{n,1}^{(\ell)})$, where $\hat{Q}_{n,1}^{(\ell)}$ is defined as $\hat{Q}_{n,1}$ with $(X_{\Pi^{(\ell)}(i)})_{i=1}^n$ in place of $(X_i)_{i=1}^n$. Here $\Pi^{(1)}, \ldots, \Pi^{(d)}$ are independent random permutations of $\{1,2,\ldots,n\}$, and independent from the data $(X_i)_{i=1}^n$.

\begin{Theorem}
\label{thm:asymptotics.2}
For fixed $d \ge 1$ and $1 \le \ell \le d$,
\[
	\bSigma(\hat{Q}_{n,1}^{(\ell)}) \ = \ \bSigma(Q) + 2 \int_{\R^q} H_1 \, d\hat{P}_n
		+ M_{n,1}^{(\ell)} + o_p(n^{-1/2}) ,
\]
where
\[
	M_{n,1}^{(\ell)} \ := \ n^{-1} \sum_{i=1}^n H_2(X_{\Pi^{(\ell)}(i)},X_{\Pi^{(\ell)}(i+1)})
\]
with $\Pi^{(\ell)}(n+1) := \Pi^{(\ell)}(1)$. Moreover,
\[
	\sqrt{n} \Bigl( \int_{\R^q} H_1 \, d\hat{P}_n, \, M_{n,1}^{(1)}, \ldots, M_{n,1}^{(d)} \Bigr) \
	\to_{\LL}^{} \ \bigl( \bs{G}_1, \, \bs{G}_2^{(1)}, \ldots, \bs{G}_2^{(d)} \bigr)
\]
with independent random matrices $\bs{G}_1$ and $\bs{G}_2^{(1)}, \ldots, \bs{G}_2^{(d)}$, where $\bs{G}_1$ and $\bs{G}_2^{(\ell)}$ have the same distribution as $\bs{G}_1$ and $\bs{G}_2$, respectively, in Theorem~\ref{thm:asymptotics}. In particular,
\[
	\sqrt{n} \Bigl( d^{-1} \sum_{\ell=1}^d \bSigma(\hat{Q}_{n,1}^{(\ell)}) - \bSigma(Q) \Bigr)
	\ \to_{\LL}^{} \ 2 \bs{G}_1 + d^{-1/2} \bs{G}_2 .
\]
\end{Theorem}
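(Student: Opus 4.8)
The plan is to parallel the proof of Theorem~\ref{thm:asymptotics}, replacing the incomplete $U$-statistic by a sum over the edges of a random Hamiltonian cycle, and to obtain the joint limit by conditioning on the data and invoking a combinatorial central limit theorem.

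\emph{Step 1 (linear expansion).} Since $\hat{Q}_{n,1}^{(\ell)} = n^{-1}\sum_{i=1}^n \delta_{X_{\Pi^{(\ell)}(i+1)} - X_{\Pi^{(\ell)}(i)}}^{\rm s}$ and permuting i.i.d.\ observations leaves their joint law unchanged, $\hat{Q}_{n,1}^{(\ell)}$ has the same distribution as $\hat{Q}_{n,1}$; hence the weak convergence $\hat{Q}_{n,1} \to Q$ in probability (the case $d=1$ of the convergence recorded in Section~\ref{sec:scatter.functionals}) carries over, and with probability tending to one the differentiability expansion of $\bSigma$ applies to $\hat{Q}_{n,1}^{(\ell)}$. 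Because $J$ is even with $J(x-y) = H_1(x) + H_1(y) + H_2(x,y)$ and $H_2$ symmetric, and since both $i \mapsto \Pi^{(\ell)}(i)$ and $i \mapsto \Pi^{(\ell)}(i+1)$ run through $\{1,\dots,n\}$, a direct computation gives $\int J\,d\hat{Q}_{n,1}^{(\ell)} = 2\int H_1\,d\hat{P}_n + M_{n,1}^{(\ell)}$. Granted the tightness $M_{n,1}^{(\ell)} = O_p(n^{-1/2})$ from Step~2 and $\int H_1\,d\hat{P}_n = O_p(n^{-1/2})$ from the ordinary central limit theorem, the remainder term $o(\|\int J\,d\hat{Q}_{n,1}^{(\ell)}\|)$ is $o_p(n^{-1/2})$, which is the first claim. (For Tyler's functional the same computation goes through with the extra trace constraint, which $H_1$ and $H_2$ inherit.)

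\emph{Step 2 (joint CLT).} I would use the Cram\'er--Wold device: fix $A_0,\dots,A_d \in \Rqqsym$ and set $T_n := \langle A_0, \sqrt{n}\int H_1\,d\hat{P}_n\rangle + \sum_{\ell=1}^d \langle A_\ell, \sqrt{n}\,M_{n,1}^{(\ell)}\rangle$. Conditionally on the data $\XX_n := (X_i)_{i=1}^n$, the first summand is a deterministic phase while the $d$ permutation parts are conditionally independent, so the conditional characteristic function of $T_n$ factorizes. For each $\ell$, $\sqrt{n}\,\langle A_\ell, M_{n,1}^{(\ell)}\rangle = n^{-1/2}\sum_{i=1}^n b_{\Pi^{(\ell)}(i),\Pi^{(\ell)}(i+1)}$ with the symmetric random array $b_{jk} := \langle A_\ell, H_2(X_j,X_k)\rangle$, and a variant of the combinatorial central limit theorem of \cite{Hoeffding_1951}, applied conditionally, shows that this converges in law to $\NN(0,\tau_\ell^2)$ with $\tau_\ell^2 := \Ex(\langle A_\ell, H_2(X_1,X_2)\rangle^2)$. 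The population degeneracy $\Ex[H_2(X_1,X_2)\mid X_1]=0$ is the crucial input: it forces the conditional mean $n^{-1/2}(n-1)^{-1}\sum_{j\ne k} b_{jk}$ and all cross-edge covariance contributions (cyclically adjacent edges sharing one vertex, and disjoint edge pairs) to vanish after scaling, so that only the diagonal survives and the conditional variance tends to $\tau_\ell^2$ by the law of large numbers. Taking unconditional expectations, the product of the $d$ conditional characteristic functions converges in probability to the constant $\exp(-\tfrac12 t^2\sum_{\ell}\tau_\ell^2)$, whence the data-dependent phase decouples and its expectation converges by the ordinary central limit theorem for $\sqrt{n}\int H_1\,d\hat{P}_n \to_{\LL} \bs{G}_1$; this yields the stated joint convergence with mutually independent Gaussian limits.

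\emph{Step 3 (conclusion).} Averaging the expansions over $\ell$ and multiplying by $\sqrt{n}$ gives $\sqrt{n}(d^{-1}\sum_\ell \bSigma(\hat{Q}_{n,1}^{(\ell)}) - \bSigma(Q)) = 2\sqrt{n}\int H_1\,d\hat{P}_n + d^{-1}\sum_{\ell=1}^d \sqrt{n}\,M_{n,1}^{(\ell)} + o_p(1)$, and the continuous mapping theorem applied to the joint limit of Step~2 produces $2\bs{G}_1 + d^{-1}\sum_{\ell=1}^d \bs{G}_2^{(\ell)}$. Since the $\bs{G}_2^{(\ell)}$ are independent copies of $\bs{G}_2$ and independent of $\bs{G}_1$, the average $d^{-1}\sum_\ell \bs{G}_2^{(\ell)}$ is centered Gaussian with $\Ex(\langle A, d^{-1}\sum_\ell \bs{G}_2^{(\ell)}\rangle^2) = d^{-1}\Ex(\langle A,\bs{G}_2\rangle^2)$ for all $A$, hence has the same law as $d^{-1/2}\bs{G}_2$, giving the asserted limit $2\bs{G}_1 + d^{-1/2}\bs{G}_2$.

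The main obstacle is the conditional combinatorial central limit theorem underlying Step~2: the sum runs over the edges of a random Hamiltonian cycle rather than over the values $a_{i,\Pi(i)}$ of the classical statistic, the kernel $b_{jk}$ is itself random and degenerate only in the population sense, and the conclusion is required conditionally on $\XX_n$ with data-dependent but convergent moments. Handling the cyclically adjacent edges — where the degeneracy of $H_2$ must offset the shared vertex — and verifying a Lindeberg-type condition strongly enough to obtain convergence \emph{in probability} of the conditional characteristic functions is the delicate part; this is exactly the variation of the combinatorial central limit theorem announced in the introduction.
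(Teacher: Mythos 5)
Your overall architecture coincides with the paper's: exchangeability reduces the linear expansion to the $d=1$ case of Theorem~\ref{thm:asymptotics}, the joint limit is obtained by conditioning on the data so that the $d$ permutation statistics become conditionally independent and the data-dependent term decouples, and the final claim follows by continuous mapping together with the computation that $d^{-1}\sum_\ell \bs{G}_2^{(\ell)}$ has the law of $d^{-1/2}\bs{G}_2$. However, there is a genuine gap in your Step~2, and you have named it yourself without closing it: the ``variant of the combinatorial central limit theorem'' for sums over the edges of a random Hamiltonian cycle is precisely the nontrivial content of the proof, and your sketch (degeneracy kills the conditional mean and the cross-edge covariances, the conditional variance converges, check a Lindeberg condition) is a heuristic, not an argument. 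Hoeffding's 1951 theorem concerns statistics of the form $\sum_i a_{i,\Pi(i)}$ for a uniform permutation $\Pi$; the statistic $\sum_i A_{\Pi(i),\Pi(i+1)}$ is of a different combinatorial type, since the map $i \mapsto \Pi(i+1)$ is not a uniform permutation independent of the indexing but the single-cycle permutation determined by $\Pi$, and one cannot simply ``apply Hoeffding conditionally'' to it.

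The paper closes this gap with a coupling that is absent from your proposal. One first rewrites $\sum_{i=1}^n A_{\Pi(i),\Pi(i+1)} = \sum_{i=1}^n A_{i,B^*(i)}$ with $B^* := (\Pi(1),\ldots,\Pi(n))_{\rm c}$ uniform on the single-cycle permutations, and then uses Feller's standardized cycle representation to construct from the same $\Pi$ a permutation $B$ that is exactly uniform on all of $\SS_n$ and satisfies $\Ex\bigl(\#\{i : B(i) \ne B^*(i)\}\bigr) \le 1 + \log(n)$. Since the entries of $A$ are of order $n^{-1/2}$ in $L^1$, the discrepancy $\sum_i (A_{i,B^*(i)} - A_{i,B(i)})$ is $o_p(1)$, and the classical Hoeffding theorem then applies to $\sum_i A_{i,B(i)}$ conditionally on the data, after verifying the three conditions \eqref{eq:Hoeffding.1}--\eqref{eq:Hoeffding.3} (convergence of $n^{-1}\sum_{i,j} A_{i,j}^2$ to $\Gamma_2$ via a truncation argument, vanishing row means --- this is where the degeneracy $\Ex(H_2(X_1,X_2)\,|\,X_1)=0$ enters --- and a Lindeberg-type condition). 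Without this coupling, or an equivalent device for converting the cycle statistic into a classical permutation statistic, your Step~2 does not go through as written; the remainder of your proposal is sound.
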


This theorem shows that averaging $\bSigma(\hat{Q}_{n,1}^{(\ell)})$ over $\ell = 1, \ldots, d$ is asymptotically equivalent to computing $\bSigma(\hat{Q}_{n,d})$. One could guess that averaging over $d(n)$ random permutations with $d(n) \to \infty$ leads to an estimator with the same asymptotic distributions as $\bSigma(\hat{Q}_n)$. But this is not obvious, because the average of $d(n)$ random variables which are uniformly of order $o_p(1)$ need not be of order $o_p(1)$ too.

\section{Numerical Illustration}
\label{sec:examples}

The computations are based on Partial Newton algorithms proposed by \cite{Duembgen_etal_2016}. They are implemented in the R package \emph{fastM} by \cite{fastMXXX} which is publicly available on CRAN.


As explained in Section~\ref{subsec:Symmetrization}, in numerous applications one is mainly interested in the scatter matrix up to positive scalars. Thus we illustrate the previous results with the shape matrix $\bs{H} := \mathrm{shape}(\bSigma(Q))$ and its estimators
\begin{align*}
	\hat{\bs{H}}_n \
	&:= \ \mathrm{shape}(\bSigma(\hat{Q}_n)) , \\
	\hat{\bs{H}}_{n,d} \
	&:= \ \mathrm{shape}(\bSigma(\hat{Q}_{n,d})) , \\
	\hat{\bs{H}}_{n,d}^{\rm rand} \
	&:= \ \mathrm{shape} \Bigl( d^{-1} \sum_{\ell=1}^d \bSigma(\hat{Q}_{n,1}^{(\ell)}) \Bigr) .
\end{align*}
On the one hand, we look at the approximation errors, that is, the distances between $\hat{\bs{H}}_{n,d}^{}, \hat{\bs{H}}_{n,d}^{\rm rand}$ and the full estimator $\hat{\bs{H}}_n^{}$. The distance between two matrices $\Sigma_1, \Sigma_2 \in \Rqqsympd$ is measured by the so-called geodesic distance
\[
	D(\Sigma_1,\Sigma_2) \ := \ \Bigl( \sum_{j=1}^q \log[\lambda_j(\Sigma_1^{-1} \Sigma_2^{})]^2 \Bigr) ,
\]
where $\lambda_1(\cdot) \ge \cdots \ge \lambda_q(\cdot)$ are the ordered real eigenvalues of a matrix $A \in \Rqq$. On the other hand, we look at the estimation errors, that is, the distances between the estimators $\hat{\bs{H}}_n^{}$, $\hat{\bs{H}}_{n,d}^{}$, $\hat{\bs{H}}_{n,d}^{\rm rand}$ and the true shape matrix $\bs{H}$.

We simulated $2000$ times a data set of size $n = 100$ in dimension $q = 10$, where each observation $X_i$ had independent components with standard exponential distribution. The scatter functional was the $M$-functional with $\rho(s) = \rho_1(s) = (q+1) \log(s + 1)$ for $s \ge 0$. In this particular example, $\bSigma(P)$ is \textsl{not} a multiple of $I_q$, but the symmetrized distributions $Q = Q(P)$ yields $\bs{H} = I_q$. Figures~\ref{fig:Approximation_errors_A_100} and \ref{fig:Approximation_errors_B_100} show box-and-whiskers plots of the resulting relative approximation errors
\[
	D(\hat{\bs{H}}_{n,d}^{},\hat{\bs{H}}_n^{})/D(\hat{\bs{H}}_n^{},\bs{H})
	\quad\text{and}\quad
	D(\hat{\bs{H}}_{n,d}^{\rm rand},\hat{\bs{H}}_n^{})/D(\hat{\bs{H}}_n^{},\bs{H}) ,
\]
respectively, for $1 \le d \le 49$. Figure~\ref{fig:Approximation_errors_AB_100} shows these ratios in one plot for $1 \le d \le 15$.

\begin{figure}
\centering
\includegraphics[width=0.9\textwidth]{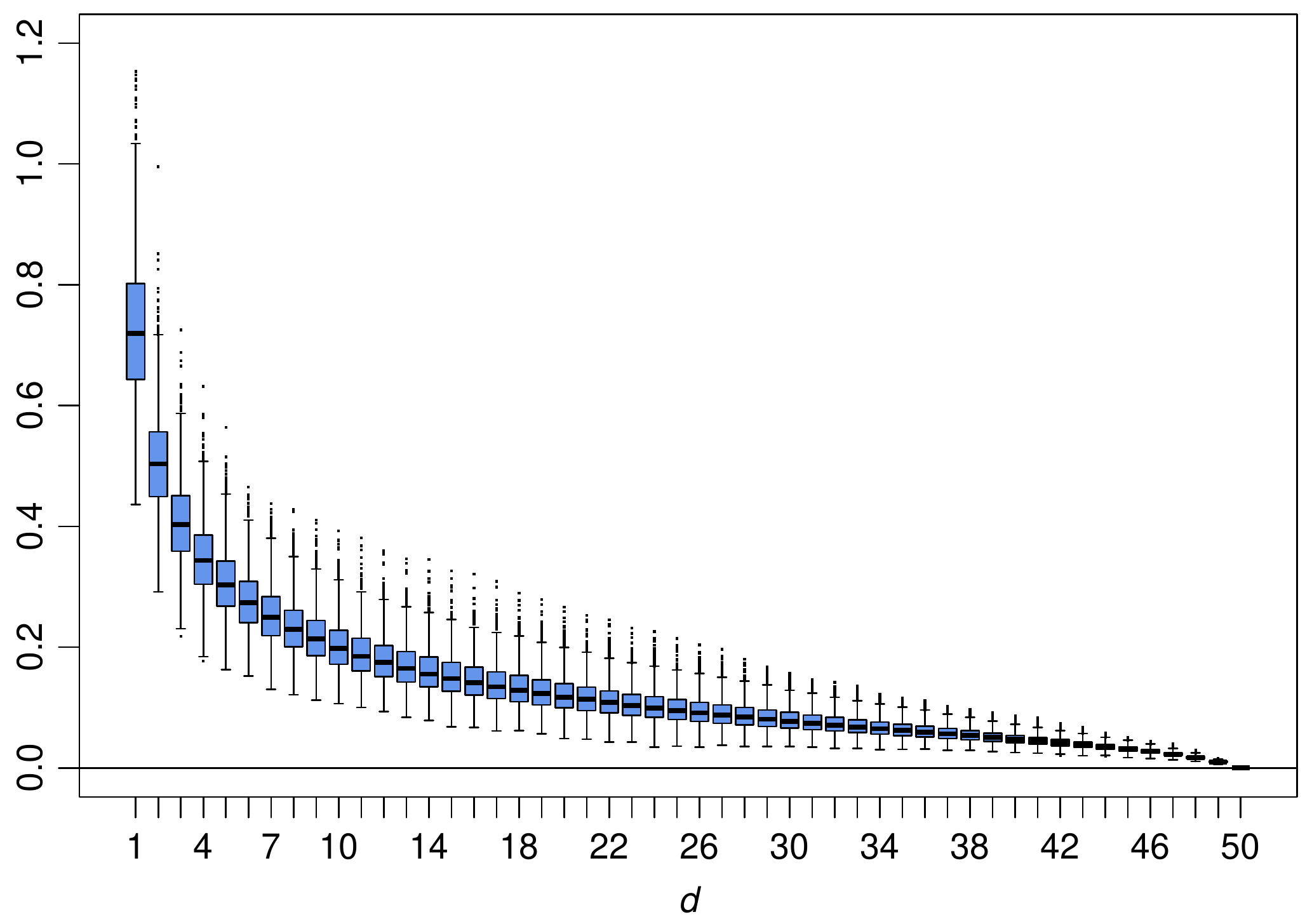}
\caption{$(q,n) = (10,100)$: Relative approximation errors $D(\hat{\bs{H}}_{n,d}^{},\hat{\bs{H}}_n^{})/D(\hat{\bs{H}}_n^{},\bs{H})$.}
\label{fig:Approximation_errors_A_100}
\end{figure}

\begin{figure}
\centering
\includegraphics[width=0.9\textwidth]{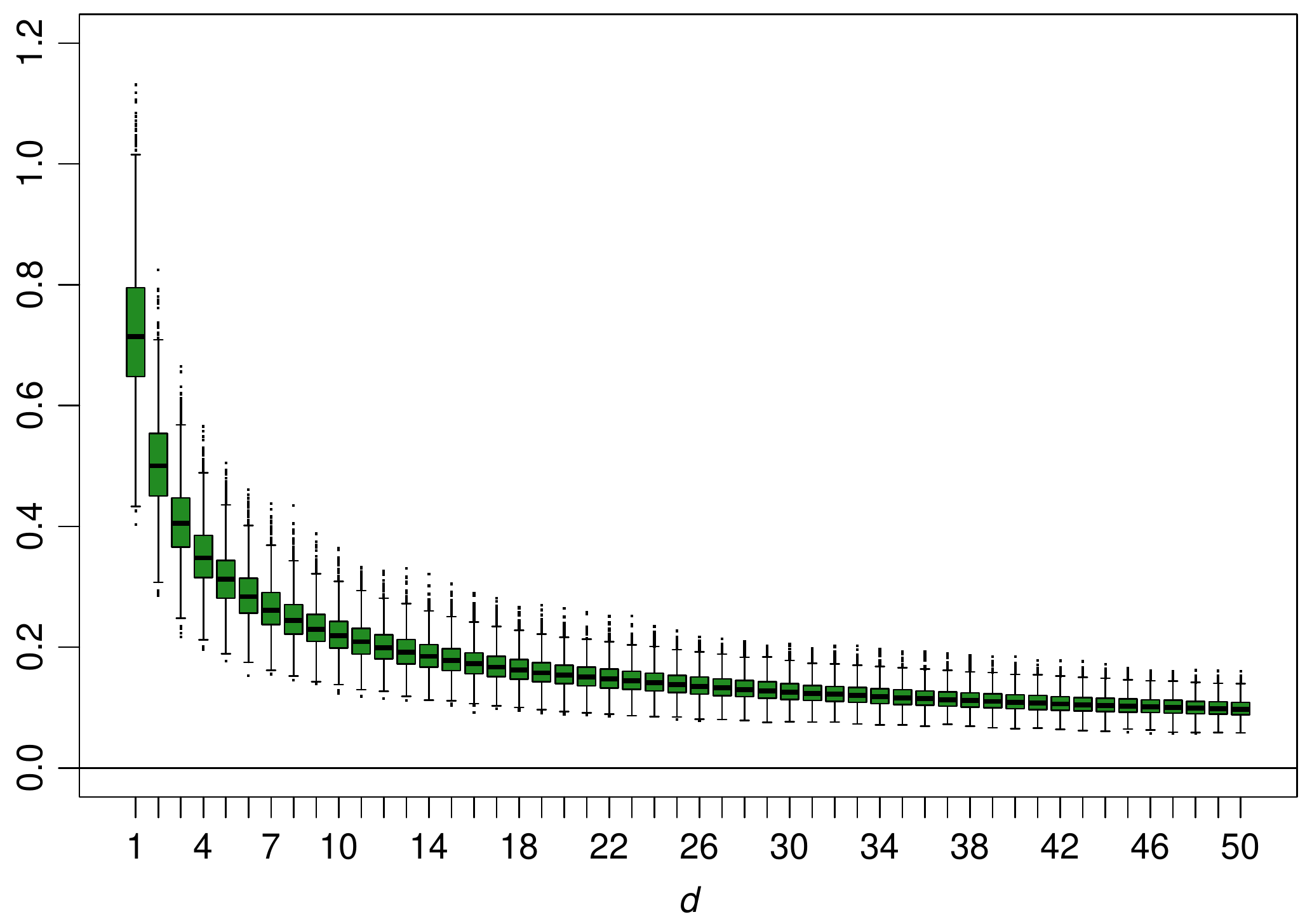}
\caption{$(q,n) = (10,100)$: Relative approximation errors $D(\hat{\bs{H}}_{n,d}^{\rm rand},\hat{\bs{H}}_n^{})/D(\hat{\bs{H}}_n^{},\bs{H})$.}
\label{fig:Approximation_errors_B_100}
\end{figure}

\begin{figure}
\centering
\includegraphics[width=0.9\textwidth]{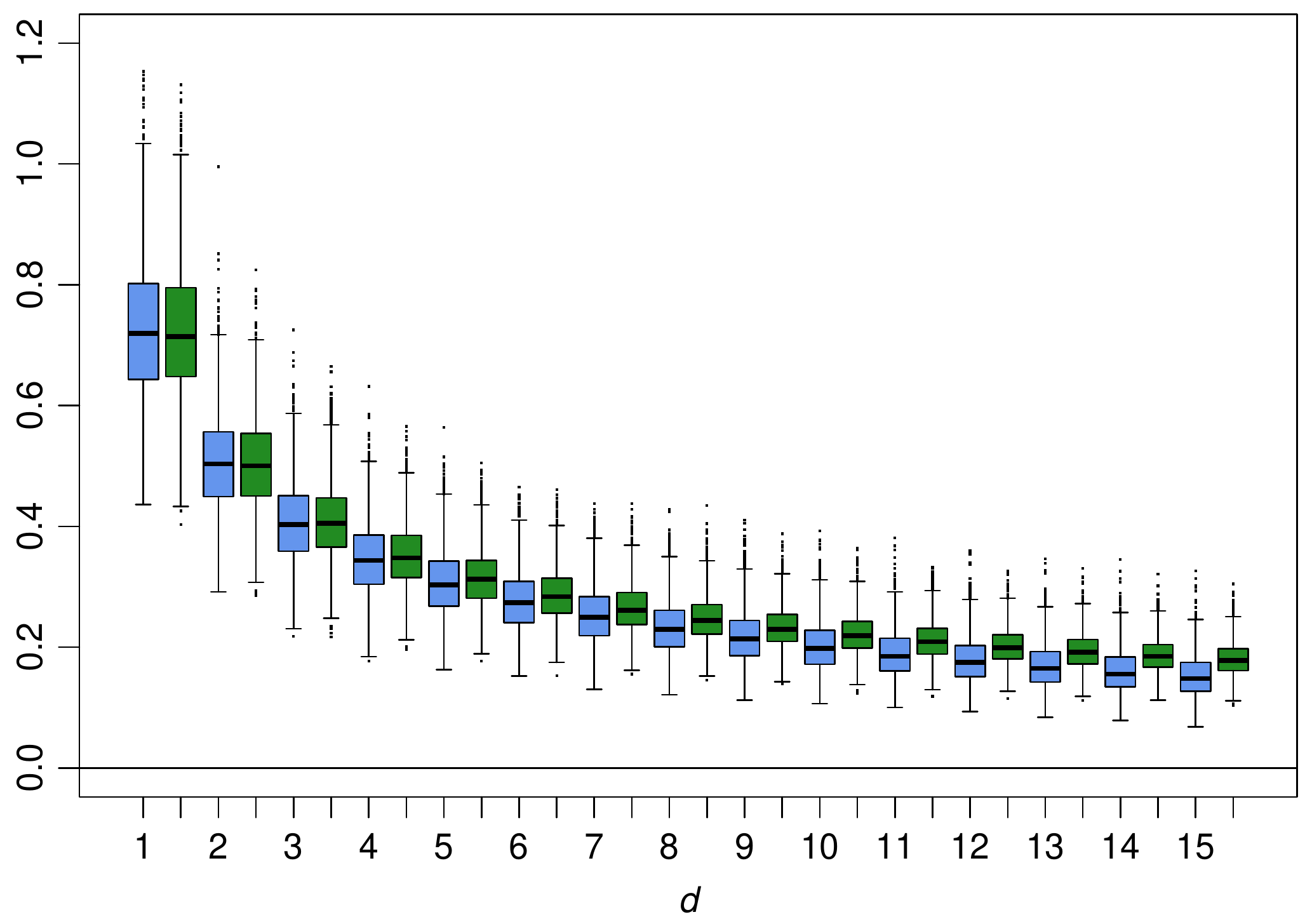}
\caption{$(q,n) = (10,100)$: Relative approximation errors $D(\hat{\bs{H}}_{n,d}^{},\hat{\bs{H}}_n^{})/D(\hat{\bs{H}}_n^{},\bs{H})$ (blue) and $D(\hat{\bs{H}}_{n,d}^{\rm rand},\hat{\bs{H}}_n^{})/D(\hat{\bs{H}}_n^{},\bs{H})$ (green).}
\label{fig:Approximation_errors_AB_100}
\end{figure}

Figures~\ref{fig:Estimation_errors_A_100}, \ref{fig:Estimation_errors_B_100} and \ref{fig:Estimation_errors_AB_100} are analogous, this time with the relative estimation errors
\[
	D(\hat{\bs{H}}_{n,d}^{},\bs{H})/D(\hat{\bs{H}}_n^{},\bs{H})
	\quad\text{and}\quad
	D(\hat{\bs{H}}_{n,d}^{\rm rand},\bs{H})/D(\hat{\bs{H}}_n^{},\bs{H}) .
\]
The median of the estimation error $D(\hat{\bs{H}}_n^{},\bs{H})$ in the simulations was equal to $1.1643$. Interestingly, the relative estimation errors approach $1$ more quickly than the relative approximation arrors approach $0$. With respect to relative estimation error, a value $d = 10$, say, seems to be sufficient, although the approximation errors for this value are still substantial.

\begin{figure}
\centering
\includegraphics[width=0.95\textwidth]{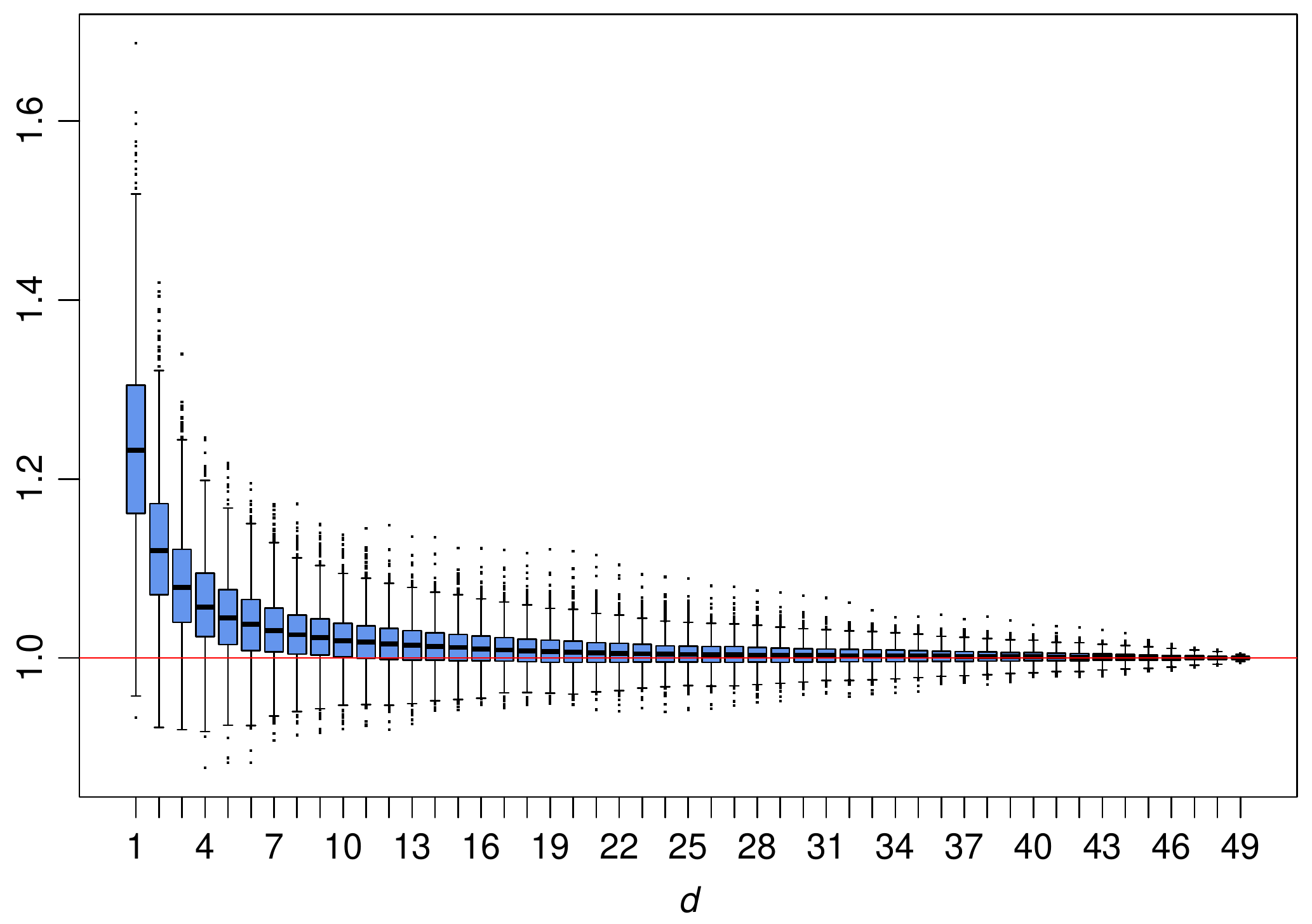}
\caption{$(q,n) = (10,100)$: Relative estimation errors $D(\hat{\bs{H}}_{n,d}^{},\bs{H})/D(\hat{\bs{H}}_n^{},\bs{H})$.}
\label{fig:Estimation_errors_A_100}
\end{figure}

\begin{figure}
\centering
\includegraphics[width=0.95\textwidth]{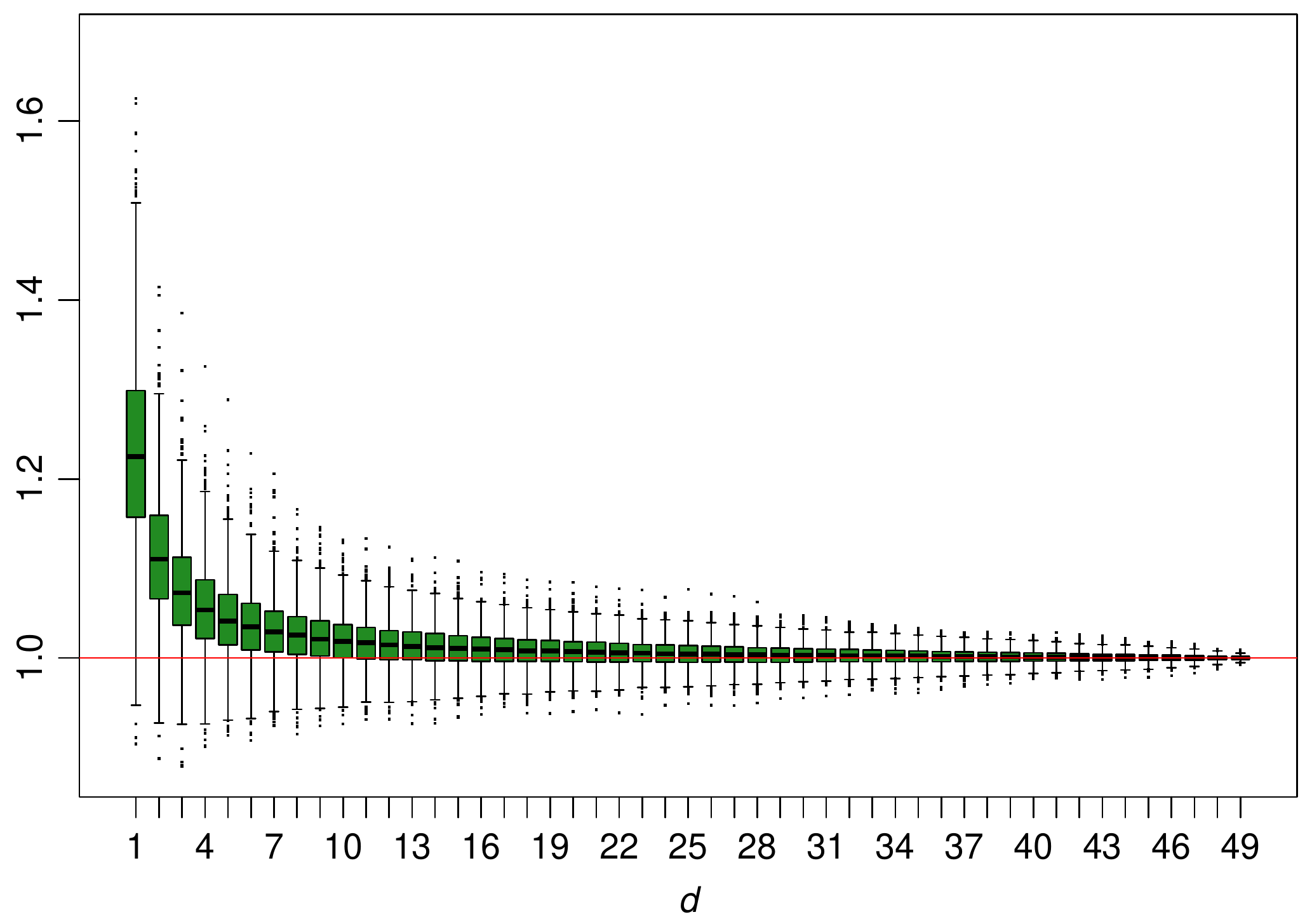}
\caption{$(q,n) = (10,100)$: Relative estimation errors $D(\hat{\bs{H}}_{n,d}^{\rm rand},\bs{H})/D(\hat{\bs{H}}_n^{},\bs{H})$.}
\label{fig:Estimation_errors_B_100}
\end{figure}

\begin{figure}
\centering
\includegraphics[width=0.9\textwidth]{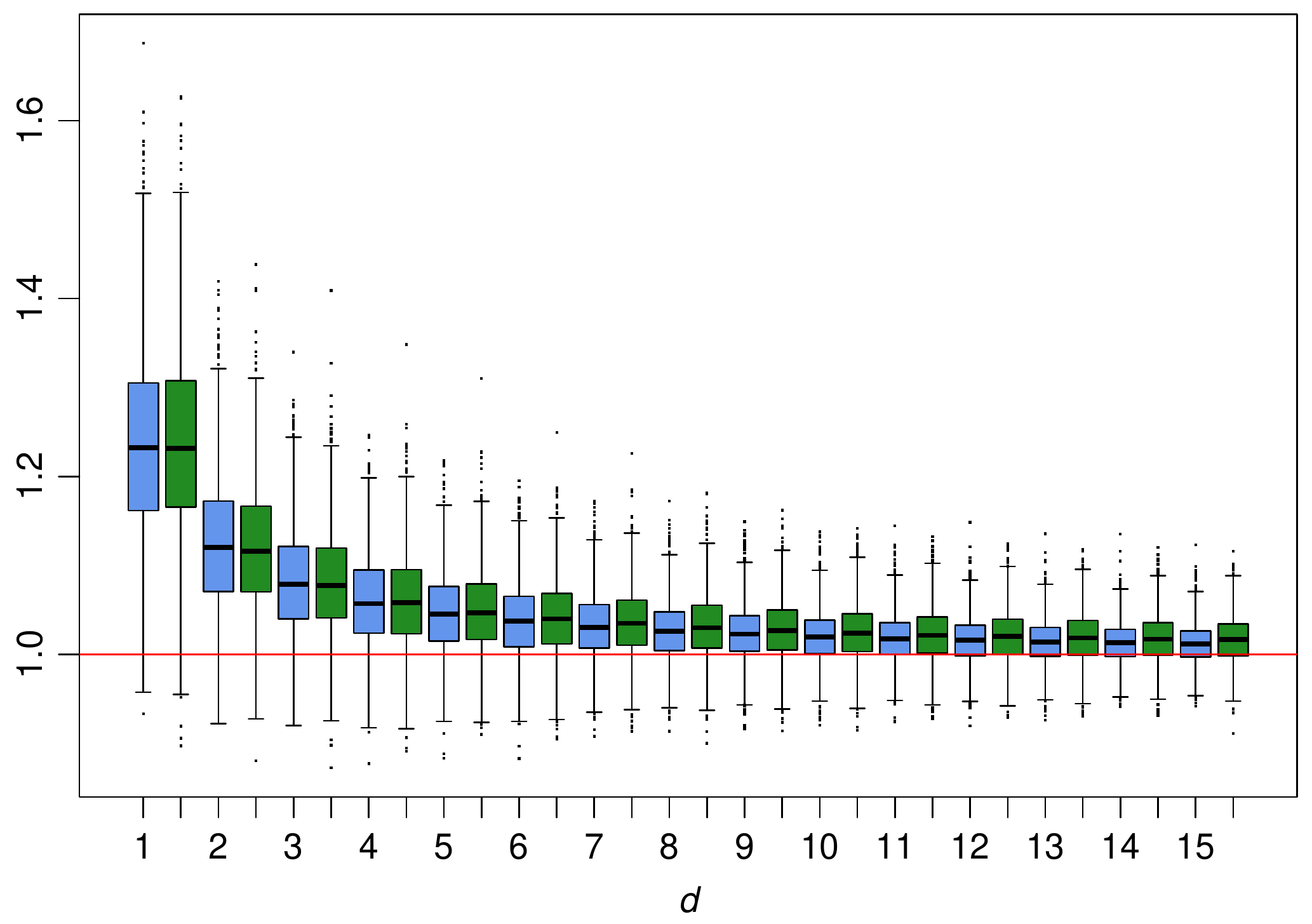}
\caption{$(q,n) = (10,100)$: Relative estimation errors $D(\hat{\bs{H}}_{n,d}^{},\bs{H})$ (blue) and $D(\hat{\bs{H}}_{n,d}^{\rm rand},\bs{H})$ (green).}
\label{fig:Estimation_errors_AB_100}
\end{figure}

We did the same simulations and calculations for sample size $n = 400$ instead of $n = 100$. Figures~\ref{fig:Approximation_errors_AB_400} and \ref{fig:Estimation_errors_AB_400} show the resulting relative approximation errors and relative estimation errors. This time, the median of $D(\hat{\bs{H}}_n^{}, \bs{H})$ was only $0.5662$. But note that the relative errors are similarly distributed for both sample sizes. The main difference seems to be that with increasing sample size the differences between $\hat{\bs{H}}_{n,d}$ and $\hat{\bs{H}}_{n,d}^{\rm rand}$ become smaller. 

The simulation results are coherent with the asymptotic theory and confirm our claim that moderately large values of $d$ yield already estimators with similar precision as the full symmetrized $M$-estimators. Therefore for larger sample sizes, computational costs are no longer a hindrance to apply symmetrized scatter matrices in practice.

\begin{figure}
\centering
\includegraphics[width=0.9\textwidth]{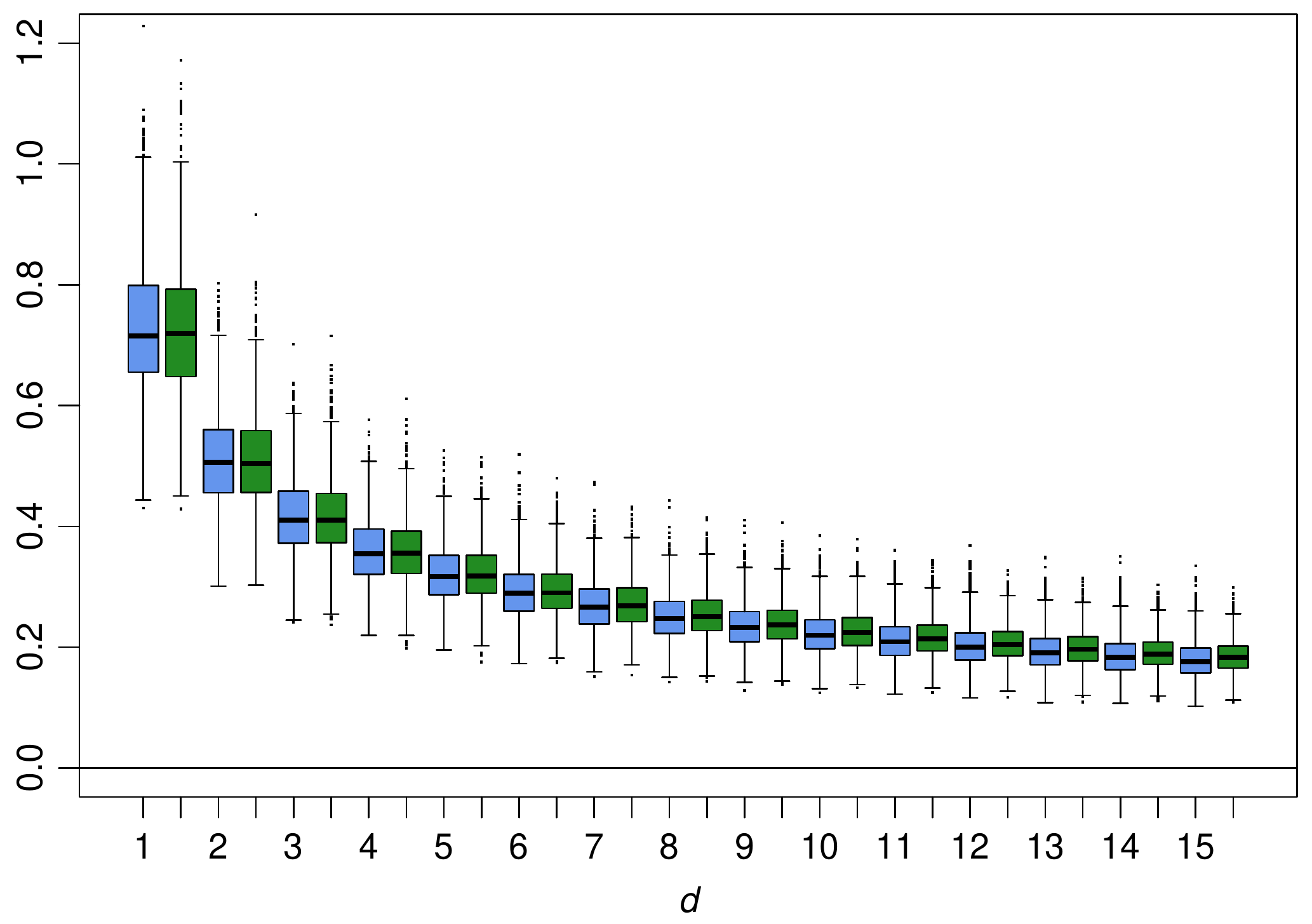}
\caption{$(q,n) = (10,400)$: Relative approximation errors $D(\hat{\bs{H}}_{n,d}^{},\hat{\bs{H}}_n^{})/D(\hat{\bs{H}}_n^{},\bs{H})$ (blue) and $D(\hat{\bs{H}}_{n,d}^{\rm rand},\hat{\bs{H}}_n^{})/D(\hat{\bs{H}}_n^{},\bs{H})$ (green).}
\label{fig:Approximation_errors_AB_400}
\end{figure}

\begin{figure}
\centering
\includegraphics[width=0.9\textwidth]{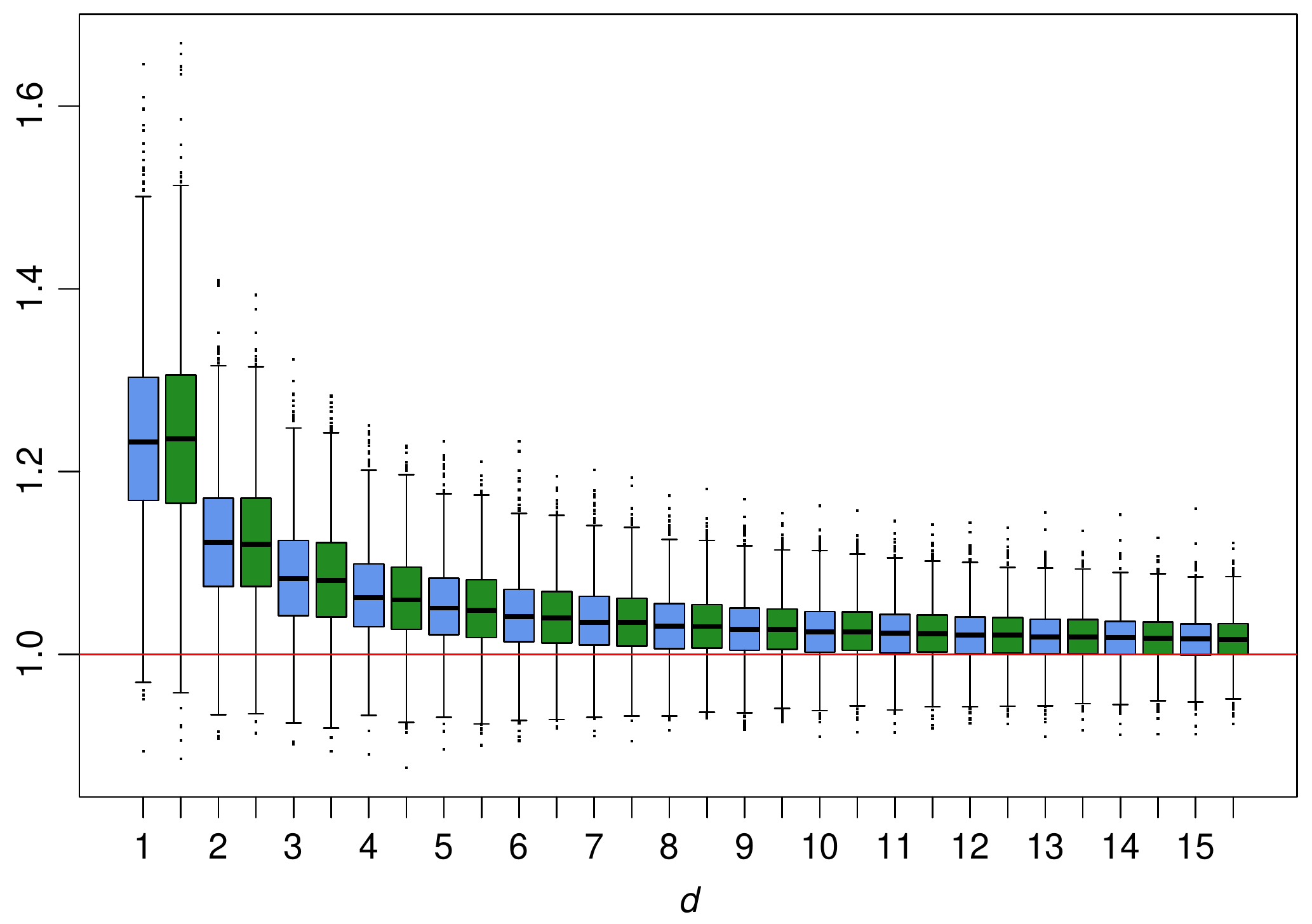}
\caption{$(q,n) = (10,400)$: Relative estimation errors $D(\hat{\bs{H}}_{n,d}^{},\bs{H})/D(\hat{\bs{H}}_n^{},\bs{H})$ (blue) and $D(\hat{\bs{H}}_{n,d}^{\rm rand},\bs{H})/D(\hat{\bs{H}}_n^{},\bs{H})$ (green).}
\label{fig:Estimation_errors_AB_400}
\end{figure}

%

\section{Proofs}
\label{sec:proofs}

\begin{proof}[\bf Proof of Proposition~\ref{prop:Qhat.in.QQ}]
Some of our arguments are similar to parts of Section~8.2 of \cite{Duembgen_etal_2015}, but for the reader's convenience, we present a complete and self-contained proof here.

\textbf{Step~0.}
For arbitrary different indices $i,j \in \{1,\ldots,n\}$, the vector $X_j - X_i \ne 0$ almost surely, because $\Pr(X_j - X_i = 0) = \Ex \Pr(X_j \in \{X_i\} \,|\, X_i) = 0$. Hence, $\hat{Q}_n(\{0\}) = \hat{Q}_{n,d}(\{0\}) = 0$ for $1 \le d \le (n-1)/2$.

\textbf{Step~1.}
Let $\SS$ be the set of all index sets $\{i,j\}$, $1 \le i < j \le n$. Let $\EE_o \subset \SS$, and set $V_o := \bigcup_{E \in \EE_o} E$. Suppose that the graph $(V_o,\EE_o)$ is connected. That means, for arbitrary $\{i,j\} \in \EE$, there exist $T \in \mathbb{N}$ and indices $i_0, i_1, \ldots, i_T$ in $V_o$ such that $i_0 = i$, $i_T = j$, and $\{i_{t-1},i_t\} \in \EE_o$ for $1 \le t \le T$. Then for any index $i_o \in V_o$, the following three linear spaces are identical:
\begin{align*}
	\mathbb{W}_1 \
	&:= \ \mathrm{span}(X_i - X_{i_o} : i \in V_o) , \\
	\mathbb{W}_2 \
	&:= \ \mathrm{span}(X_j - X_i : \{i, j\} \in \EE_o) , \\
	\mathbb{W}_3 \
	&:= \ \mathrm{span}(X_j - X_i : i,j \in V_o) .
\end{align*}
The inclusions $\mathbb{W}_1, \mathbb{W}_2 \subset \mathbb{W}_3$ are obvious. On the other hand, for $i,j \in V_o$, the vector $X_j - X_i = (X_j - X_{i_o}) - (X_i - X_{i_o}) \in \mathbb{W}_1$, whence $\mathbb{W}_3 \subset \mathbb{W}_1$. Finally, by connectednes of $(V_o,\EE_o)$, for arbitrary different indices $i,j \in V_o$, there exist $T \in \mathbb{N}$ and indices $i_0, i_1, \ldots, i_T \in V_o$ such that $i_0 = i$, $i_T = j$, and $\{i_{t-1},i_t\} \in \EE_o$ for $1 \le t \le T$. Hence, $X_j - X_i = \sum_{t=1}^T (X_{i_t} - X_{i_{t-1}}) \in \mathbb{W}_2$, and this shows that $\mathbb{W}_3 \subset \mathbb{W}_2$.

\textbf{Step~2.}
Let $\EE$ be an arbitrary subset of $\SS$, and let $V := \bigcup_{E \in \EE} E$. Let $V_1, \ldots, V_M$ be the $M \ge 1$ maximal connected components of the graph $(V,\EE)$. That means, $\EE = \bigcup_{m=1}^M \EE_m$ with sets $\EE_m \subset \SS$ such that the sets $V_m := \bigcup_{E \in \EE_m} E$ are disjoint, and each subgraph $(V_m,\EE_m)$ is connected. Then, the linear space
\[
	\mathbb{W} \ := \ \mathrm{span}(X_i - X_j : \{i,j\} \in \EE)
\]
has almost surely dimension
\[
	\dim(\mathbb{W}) \ = \ \min(S,q)
	\quad\text{with}\quad
	S \ := \ \sum_{m=1}^M (\#V_m - 1) .
\]
To verify this, fix an arbitrary point $i_m \in V_m$ for $1 \le m \le M$. Then Step~1 shows that
\[
	\mathbb{W} \ = \ \sum_{m=1}^M \mathrm{span}(X_j - X_{i_m} : j \in V_m \setminus \{i_m\}) ,
\]
and it suffices to show that in case of $S \le q$, the vectors $X_j - X_{i_m}$, $j \in V_m \setminus \{i_m\}$, $1 \le m \le M$, are almost surely linearly independent. But this can be shown by induction: Let $\bigl\{ \{i_m,j\} : 1 \le m \le M, j \in V_m \setminus \{i_m\} \bigr\} = \{(k_1,\ell_1), \ldots, (k_S,\ell_S)\}$ with $k_1,\ldots,k_S \in \{i_1,\ldots,i_M\}$ and $\ell_s \in \bigcup_{m=1}^M V_m \setminus \{i_m\}$. Then, by Step~0, $X_{\ell_1} - X_{k_1} \ne 0$ almost surely, and for $1 \le s < S$ and $\mathbb{W}_s := \mathrm{span}(X_{\ell_r} - X_{k_r} : 1 \le r \le s)$,
\begin{align*}
	\Pr( & X_{\ell_{s+1}} - X_{k_{s+1}} \not\in \mathbb{W}_s) \\
	&= \ \Ex \Pr \bigl( X_{\ell_{s+1}} \not\in X_{k_{s+1}} + \mathbb{W}_s \,\big|\,
		X_i : i \in \{i_1,\ldots,i_M\} \cup \{\ell_1,\ldots,\ell_s\} \bigr) \ = \ 0 .
\end{align*}

\textbf{Step~3.}
With $(V,\EE)$ and its subgraphs $(V_m,\EE_m)$, $1 \le m \le M$, as in Step~2,
\[
	\# \EE \ \le \ \sum_{m=1}^M \binom{\# V_m}{2} \ \le \ \binom{S+1}{2} .
\]
The first inequality is a consequence of $\#\EE_m \le \binom{\# V_m}{2}$ for $1 \le m \le M$. The second inequality follows from the fact that the mapping
\[
	\EE \ni \{i,j\} \ \mapsto \ \begin{cases}
		\{i,j\} & \text{for} \ i,j \in V_m \setminus \{i_m\}, 1 \le m \le M , \\
		\{0,j\} & \text{for} \ i = i_m, j \in V_m \setminus \{i_m\}, 1 \le m \le M ,
	\end{cases}
\]
is injective, and the images are subsets of $\{0\} \cup \bigcup_{m=1}^M V_m \setminus \{i_m\}$ with two elements.

For a fixed integer $d \ge 1$ with $d \le (n-1)/2$, let $\SS_d$ be the subset of all $\{i,j\} \in \SS$ such that $0 < j-i \le d$ or $j - i \ge n-d$. That means, for any $i \in \{1,\ldots,n\}$ there are exactly $2d$ indices $j \in \{1,\ldots,n\}$ such that $\{i,j\} \in \SS_d$. Then
\[
	\# (\EE \cap \SS_d) \ \le \ S d
	\quad\text{unless} \ M = 1 \ \text{and} \ V = \{1,2,\ldots,n\} .
\]
To see this, note that in case of $M > 1$ or $V \ne \{1,2,\ldots,n\}$, all sets $V_m$ are different from $\{1,\ldots,n\}$. For a given $m \in \{1,\ldots,M\}$, let $k \in \{1,\ldots,n\} \setminus V_m$. To get an upper bound for $\# (\EE_m \cap \SS_d)$, we may assume without loss of generality that $k = n$. Otherwise, we could transform $\{1,2,\ldots,n\}$ with the permutation $i \mapsto T(i) := 1_{[i \le k]} (i + n - k) + 1_{i > k} (i - k)$, because $\{i,j\} \in \SS_d$ if and only if $\{T(i),T(j)\} \in \SS_d$. Now, if $i_0 < i_1 < \cdots < i_{q_m} < n$ are the elements of $V_m$, then
\begin{align*}
	\#(\EE_m \cap \SS_d) \
	= \ &\# \bigl\{ \{i_a,i_b\} : 0 \le a < b \le q_m, i_b - i_a \le d \ \text{or} \ i_b - i_a \ge n-d \bigr\} \\
	\le \ &\# \bigl\{ \{a,b\} : 0 \le a < b \le q_m, b - a \le d \bigl\} \\
		&+ \ \# \bigl\{ \{i,j\} : 1 \le i < j < n, j-i \ge n-d \bigr\} \\
	= \ &\# \bigl\{ \{a,a+c\} : 1 \le c \le d, 0 \le a \le q_m - c \bigl\} \\
		&+ \ \# \bigl\{ \{i,j\} : 1 \le i < d, n-d+i \le j < n \bigr\} \\
	\le \ &\sum_{c=1}^d (q_m + 1 - c) + \sum_{i=1}^{d-1} (d-i) \\
	= \ &q_m d - \sum_{c' = 0}^{d-1} c' + \sum_{i' = 1}^{d-1} i'
		\ = \ q_m d \ = \ (\# V_m - 1) d .
\end{align*}

\textbf{Step~4.}
Since there are only finitely many nonempty subsets $\EE$ of $\SS$, we may conclude from Step~2 that for \textsl{any} nonempty set $\EE \subset \SS$, the dimension of $\mathrm{span}(X_j - X_i : \{i,j\} \in \EE)$ is given by $S = S(\EE)$ as defined in Step~2. Now we consider an arbitrary linear subspace $\mathbb{W}$ of $\R^q$ with dimension $q' < q$ such that $\EE = \EE(\mathbb{W}) := \bigl\{ \{i,j\} \in \SS : X_j - X_i \in \mathbb{W} \bigr\}$ is nonempty. Then Step~3 implies that
\[
	\hat{Q}_n(\mathbb{W}) \
	\le \ \binom{n}{2}^{-1} \binom{q'+1}{2}
	\quad\text{and}\quad
	\hat{Q}_{n,d}(\mathbb{W}) \
	\le \ \frac{q'}{n} .
\]
But
\[
	\binom{n}{2}^{-1} \binom{q'+1}{2} \
	= \ \frac{q'}{q} \frac{q(q'+1)}{n(n-1)} \
	\le \ \frac{q'}{q} \frac{q^2}{n(n-1)}
	\quad\text{and}\quad
	\frac{q'}{n} \ = \ \frac{q'}{q} \frac{q}{n} .
\]
Both factors $q^2/(n(n-1))$ and $q/n$ are strictly smaller than $1$ if and only if $n > q$. This proves our claim about $\hat{Q}_n$ and $\hat{Q}_{n,d}$.
\end{proof}

\paragraph{Some facts about complete and balanced incomplete $U$-statistics.}
Let us first recollect some well-known facts about $U$-statistics of order two \citep{Serfling_1980,Lee_1990}, with obvious adaptations to vector-valued kernels and the particular distributions $\hat{Q}_n$ and $\hat{Q}_{n,d}$. For some integer $r \ge 1$, let $f : \R^q \to \R^r$ be measurable such that $\Ex(\|f(X_1 - X_2)\|^2) < \infty$. With the symmetrized function $f^{\rm s}(x) := 2^{-1} \bigl( f(x) + f(-x) \bigr)$, define $f_0 := \Ex f(X_1 - X_2) = \Ex f^{\rm s}(X_1 - X_2)$ and
\[
	f_1(x) \ := \ \Ex f^{\rm s}(x - X_1) - f_0 ,
	\quad
	f_2(x,y) \ := \ f^{\rm s}(x-y) - f_0 - f_1(x) - f_1(y)
\]
for $x,y \in \R^q$. Then the covariance matrices $\Gamma := \Var(f(X_1 - X_2))$, $\Gamma^{\rm s} := \Var(f^{\rm s}(X_1 - X_2))$, $\Gamma_1 := \Var(f_1(X_1))$ and $\Gamma_2 := \Var(f_2(X_1,X_2))$ satisfy the (in)equalities
\[
	\Gamma \ \ge \ \Gamma^{\rm s} \ = \ 2 \Gamma_1 + \Gamma_2 .
\]
Here and subsequently, inequalities between symmetric matrices refer to the Loewner partial order on $\Rqqsym$. The random vectors $f_1(X_i)$, $1 \le i \le n$, and $f_2(X_i,X_j)$, $1 \le i < j \le n$, are centered and uncorrelated, and
\begin{align*}
	U_n \ := \ \int_{\R^q} f \, d\hat{Q}_n \
	&= \ f_0 + 2 \int_{\R^q} f_1 \, d\hat{P}_n + M_n , \\
	U_{n,d} \ := \ \int_{\R^q} f \, d\hat{Q}_{n,d} \
	&= \ f_0 + 2 \int_{\R^q} f_1 \, d\hat{P}_n + M_{n,d} ,
\end{align*}
where
\[
	M_n \ := \ \binom{n}{2}^{-1} \sum_{1 \le i < j \le n} f_2(X_i,X_j) ,
	\quad
	M_{n,d} \ := \ (nd)^{-1} \sum_{i=1}^n \sum_{j=i+1}^{i+d} f_2(X_i,X_j) .
\]
Moreover, $\Ex(U_n) = \Ex(U_{n,d}) = f_0$, and
\begin{equation}
\label{ineq:Var.U}
	\left.\begin{array}{rll}
		n \Var(U_n)
		&= \ 4 \Gamma_1 + n \Var(M_n) &= \ 4 \Gamma_1 + 2(n-1)^{-1} \Gamma_2 \\[0.5ex]
		n \Var(U_{n,d})
		&= \ 4 \Gamma_1 + n \Var(M_{n,d}) &= \ 4 \Gamma_1 + d^{-1} \Gamma_2
	\end{array} \right\}
	\ \le \ 2 \Gamma .
\end{equation}

The final ingredient for the proof of Theorem~\ref{thm:asymptotics} is a result about the asymptotic joint distribution of $\int_{\R^q} f_1 \, d\hat{P}_n$ and $M_{n,d}$.

\begin{Proposition}
\label{prop:asymptotics.incomplete.balanced.U}
For any fixed $d \ge 1$, the random pair $\bigl( \sqrt{n} \int_{\R^q} f_1 \, d\hat{P}_n, \sqrt{nd} \, M_{n,d} \bigr)$ converges in distribution to $\NN_r(0,\Gamma_1) \otimes \NN_r(0,\Gamma_2)$.
\end{Proposition}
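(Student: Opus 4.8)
The plan is to reduce the bivariate statement to a one-dimensional central limit theorem via the Cram\'er--Wold device and then apply a martingale central limit theorem, exploiting the double degeneracy of $f_2$. Recall that $M_{n,d} = (nd)^{-1} \sum_{\{i,j\} \in \SS_d} f_2(X_i,X_j)$, where $\SS_d$ collects the $\# \SS_d = nd$ unordered pairs $\{i,j\}$ of circular distance at most $d$ (and $f_2$ is symmetric). Fix $a,b \in \R^r$ and put
\[
	T_n \ := \ a^\top \sqrt{n} \int_{\R^q} f_1 \, d\hat{P}_n
		+ b^\top \sqrt{nd} \, M_{n,d}
	\ = \ n^{-1/2} \sum_{i=1}^n a^\top f_1(X_i)
		+ (nd)^{-1/2} \sum_{\{i,j\} \in \SS_d} b^\top f_2(X_i,X_j) .
\]
It suffices to prove that $T_n \to_{\LL} \NN \bigl( 0, a^\top \Gamma_1 a + b^\top \Gamma_2 b \bigr)$ for all $a,b$: the limiting variance contains no cross term coupling $a$ and $b$, which is exactly the independence of the two Gaussian limits, so Cram\'er--Wold delivers convergence to $\NN_r(0,\Gamma_1) \otimes \NN_r(0,\Gamma_2)$.

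To create a martingale I would use the filtration $\mathcal{F}_j := \sigma(X_1,\ldots,X_j)$ and assign each pair $\{i,j\} \in \SS_d$, written with $i < j$, to its larger index $j$, so that $T_n = \sum_{j=1}^n \xi_{n,j}$ with
\[
	\xi_{n,j} \ := \ n^{-1/2} a^\top f_1(X_j)
		+ (nd)^{-1/2} b^\top \sum_{i < j :\, \{i,j\} \in \SS_d} f_2(X_i,X_j) .
\]
Each $\xi_{n,j}$ is $\mathcal{F}_j$-measurable, and since $X_j$ is independent of $\mathcal{F}_{j-1}$ while every relevant $X_i$ is $\mathcal{F}_{j-1}$-measurable, the centering $\Ex f_1(X_j) = 0$ together with the degeneracy $\Ex[f_2(X_i,X_j) \,|\, X_i] = 0$ gives $\Ex[\xi_{n,j} \,|\, \mathcal{F}_{j-1}] = 0$ (this covers the wrap-around pairs as well, as they are collected at their larger linear index). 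Thus $(\xi_{n,j})_j$ is a martingale difference array, and it remains to verify the two hypotheses of the martingale central limit theorem: convergence in probability of the conditional variance $\sum_{j=1}^n \Ex[\xi_{n,j}^2 \,|\, \mathcal{F}_{j-1}]$ to the constant $a^\top \Gamma_1 a + b^\top \Gamma_2 b$, and a Lindeberg condition.

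The Lindeberg condition is routine: for fixed $d$ each $\xi_{n,j}$ is a sum of at most $1 + 2d$ summands, each of order $n^{-1/2}$ in $L^2$, so the assumption $\Ex(\|f(X_1-X_2)\|^2) < \infty$ and a standard truncation yield $\sum_j \Ex[\xi_{n,j}^2 \mathbf{1}\{|\xi_{n,j}| > \epsilon\}] \to 0$. The conditional-variance computation carries the real content, and I expect it to be the main obstacle. Expanding the square splits it into three contributions. The pure $f_1$ part is deterministic and equals $a^\top \Gamma_1 a$ exactly. In the pure $f_2$ part, the diagonal terms ($i = i'$) converge to $b^\top \Gamma_2 b$ by the law of large numbers, since each index contributes about $d$ times and the per-index conditional second moment averages to $\Gamma_2$; the off-diagonal terms ($i \ne i'$) have mean zero by the degeneracy in the second argument, $\Ex[f_2(X_i,X_j) \,|\, X_j] = 0$, and since terms sharing no index are uncorrelated their total variance is of order $d^2/n$, so they vanish in probability. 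Finally, the cross term between the $f_1$ and $f_2$ parts has conditional expectation built from $\Ex[f_1(X_j) f_2(X_i,X_j)^\top \,|\, X_i]$ with $i<j$; after summation this becomes a normalized average of the centered variables $a^\top \Ex[f_1(X_j) f_2(X_i,X_j)^\top \,|\, X_i] b$, whose common mean equals $a^\top \Cov\bigl(f_1(X_1), f_2(X_1,X_2)\bigr) b = 0$ because $f_1(X_1)$ and $f_2(X_1,X_2)$ are uncorrelated, and which therefore tends to $0$ in probability by the law of large numbers. Collecting the three pieces produces exactly $a^\top \Gamma_1 a + b^\top \Gamma_2 b$, and the vanishing of the surviving cross term is what yields the asymptotic independence asserted by the proposition.
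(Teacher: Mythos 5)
Your proof is correct, but it follows a genuinely different route from the paper. The paper treats the pair $\bigl(\sqrt{n}\int f_1\,d\hat{P}_n,\sqrt{nd}\,M_{n,d}\bigr)$ directly as a $(d+1)$-dependent sum and uses the classical blocking device: partition the indices into blocks of length $k$, drop the within-block boundary terms so that the block sums $(Y_\ell^k,Z_\ell^k)$ become i.i.d.\ with $\Cov(Y_\ell^k,Z_\ell^k)=0$, apply the ordinary multivariate CLT, and then let $k\to\infty$ while controlling $\Ex\|S_n-S_n^k\|^2$ and $\Ex\|T_n-T_n^k\|^2$. That argument needs only the i.i.d.\ CLT and yields the joint (vector) convergence and the independence of the two limits in one stroke, at the price of a double limit in $n$ and $k$. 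You instead reduce to dimension one via Cram\'er--Wold and build a martingale difference array by assigning each pair in $\SS_d$ to its larger index, so the degeneracy $\Ex[f_2(x,X_2)]=0$ gives the martingale property (including for the wrap-around pairs), and the asymptotic independence appears as the vanishing of the $f_1$--$f_2$ cross term in the conditional variance. This is a one-pass argument and arguably more transparent about where independence comes from, but it shifts the work into the conditional-variance computation. One point there deserves a little more care than you give it: the diagonal $f_2$-contribution is a weighted average $(nd)^{-1}\sum_i c_i\,g(X_i)$ with weights $c_i\le 2d$ and $g(x):=\Ex\bigl[(b^\top f_2(x,X_2))^2\bigr]$, and $g(X_1)$ is only known to be integrable, not square-integrable, so the ``law of large numbers'' step requires the same truncation you invoke for Lindeberg (split $g$ at a level $M$, use Chebyshev on the bounded part and $\Ex[g\,1_{[g>M]}]\to 0$ on the tail); similarly the off-diagonal and cross terms need the Cauchy--Schwarz bounds $h(x,x')^2\le g(x)g(x')$ and $\phi(x)^2\le C\,g(x)$ to justify the variance computations. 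These are routine repairs, not gaps, so the proof stands as a valid alternative to the paper's.
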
 

\begin{proof}[\bf Proof of Proposition~\ref{prop:asymptotics.incomplete.balanced.U}]
The proof of this result uses a standard trick for sequences of $m$-dep\-en\-dent random variables, in our case, $m = d+1$. For a fixed number $k \ge d$, let
\begin{align*}
	S_n \ &:= \ n^{-1/2} \sum_{i=1}^n f_1(X_i) \ = \ \sqrt{n} \int_{\R^q} f_1 \, d\hat{P}_n , \\
	S_n^k \ &:= \ n^{-1/2} \sum_{\ell=1}^{\lfloor n/k\rfloor} Y_\ell^k
		\quad\text{with}\quad
		Y_\ell^k \ := \sum_{i=\ell k - k + 1}^{\ell k} f_1(X_i) , \\
	T_n \ &:= \ (nd)^{-1/2} \sum_{i=1}^n \sum_{j=i+1}^{i+d} f_2(X_i,X_j) \ = \ \sqrt{nd} M_{n,d} , \\
	T_n^k \ &:= \ (nd)^{-1/2} \sum_{\ell=1}^{\lfloor n/k\rfloor} Z_\ell^k
		\quad\text{with}\quad
		Z_\ell^k \ := \sum_{i=\ell k - k + 1}^{\ell k} \sum_{j=i+1}^{\min(i+d,\ell k)} f_2(X_i,X_j) .
\end{align*}
The random pairs $(Y_\ell^k,Z_\ell^k)$, $\ell \ge 1$, are independent and identically distributed with $\Ex(Y_\ell^k) = \Ex(Z_\ell^k) = 0$ and
\[
	\Var(Y_\ell^k) \ = \ k \, \Gamma_1 ,
	\quad
	\Var(Z_\ell^k) \ = \ \Bigl( k - \frac{d-1}{2} \Bigr) d \, \Gamma_2,
	\quad
	\Cov(Y_\ell^k,Z_\ell^k) \ = \ 0 .
\]
Consequently, it follows from the multivariate central limit theorem and Slutzky's lemma that
\[
	(S_n^k,T_n^k) \
	\to_{\LL}^{} \ \NN_r(0, \Gamma_1) \otimes \NN_r \Bigl( 0, \Bigl( 1 - \frac{d-1}{2k} \Bigr) \Gamma_2 \Bigr) ,
\]
and the distribution on the right hand side converges weakly to $\NN_r(0,\Gamma_1) \otimes \NN_r(0,\Gamma_2)$ as $k \to \infty$. Moreover,
\begin{align*}
	\Ex (\|S_n - S_n^k\|^2) \
	&\le \ \frac{k-1}{n} \, \mathrm{trace}(\Gamma_1) \ \to \ 0 , \\
	\Ex (\|T_n - T_n^k\|^2) \
	&\le \ \Bigl( \frac{d-1}{2k} + \frac{k-1}{nd} \Bigr) \mathrm{trace}(\Gamma_2)
		\ \to \ \frac{d-1}{2k} \, \mathrm{trace}(\Gamma_2) ,
\end{align*}
and the right hand side converges to $0$ as $k \to \infty$. This implies that $(S_n,T_n)$ converges in distribution to $\NN_r(0,\Gamma_1) \otimes \NN_r(0,\Gamma_2)$.
\end{proof}

\begin{proof}[\bf Proof of Theorem~\ref{thm:asymptotics}]
Let $\check{Q}_n$ stand for $\hat{Q}_n$, $\hat{Q}_{n,d(n)}$ with $(n-1)/2 \ge d(n) \to \infty$, or $\hat{Q}_{n,d}$ with fixed $d \ge 1$. For any bounded, continuous function $f : \R^q \to \R$,
\[
	\Ex \Bigl| \int_{\R^q} f \, d\check{Q}_n - \int_{\R^q} f \, dQ \Bigr| \ \le \ (2/n)^{1/2} \|f\|_\infty .
\]
This follows from inequality \eqref{ineq:Var.U} applied to real-valued functions. This implies that $d_{\LL}(\check{Q}_n,Q) \to_p 0$. In particular,
\[
	\bSigma(\check{Q}_n) \ = \ \bSigma(Q) + \int_{\R^q} J \, d\check{Q}_n
		+ o \Bigl( \Bigl\| \int_{\R^q} J \, d\check{Q}_n \Bigr\| \Bigr) .
\]
We may identify $\Rqqsym$ with $\R^r$, where $r = q(q+1)/2$. Then $\int_{\R^q} J \, d\check{Q}_n$ is a (complete or balanced incomplete) $U$-statistic with vector-valued kernel function, and it follows from boundedness of $J(\cdot)$ with $\int_{\R^q} J \, dQ = 0$ and the general considerations about $U$-statistics that $\int_{\R^q} J \, d\check{Q}_n = O_p(n^{-1/2})$. Consequently,
\[
	\bSigma(\check{Q}_n) \ = \ \bSigma(Q) + \int_{\R^q} J \, d\check{Q}_n + o_p(n^{-1/2}) ,
\]
so we may replace $\bSigma(\check{Q}_n) - \bSigma(Q)$ with the matrix-valued $U$-statistic $\int_{\R^q} J \, d\check{Q}_n$. But then the assertions of Theorem~\ref{thm:asymptotics} are direct consequences of the general considerations about $U$-statistics and Proposition~\ref{prop:asymptotics.incomplete.balanced.U}.
\end{proof}

For the proof of Theorem~\ref{thm:asymptotics.2} we need a variation of Proposition~\ref{prop:asymptotics.incomplete.balanced.U} for the random vectors
\[
	\tilde{M}_{n,1} \ := \ n^{-1} \sum_{i=1}^n f_2(X_{\Pi(i)}, X_{\Pi(i+1)}) ,
\]
where $\Pi$ is uniformly distributed on the set of all permutations of $\{1,2,\ldots,n\}$, independent from $(X_i)_{i=1}^n$, and $\Pi(n+1) := \Pi(1)$.

\begin{Proposition}
\label{prop:asymptotics.incomplete.randomized.U}
Let $d_{\LL}(\cdot,\cdot)$ be a metric on the space of probability distributions on $\R^r$ which metrizes weak convergence. Then,
\[
	d_{\LL} \Bigl( \LL \bigl( \sqrt{n} \, \tilde{M}_{n,1} \,
		\big| \, (X_i)_{i=1}^n \bigr), \NN_r(0, \Gamma_2) \Bigr)
	\ \to_p \ 0 .
\]
\end{Proposition}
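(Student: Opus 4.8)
The plan is to condition on the data $(X_i)_{i=1}^n$ and to read $\sqrt{n}\,\tilde{M}_{n,1}$ as a sum of weights along a uniformly random Hamiltonian cycle. Writing $a_{kl} := f_2(X_k,X_l)$, this array is symmetric because $f^{\rm s}$ is even, and conditionally on the data
\[
	\sqrt{n}\,\tilde{M}_{n,1} \ = \ n^{-1/2} \sum_{i=1}^n a_{\Pi(i),\Pi(i+1)} ,
\]
i.e.\ the sum of $a_{kl}$ over the $n$ edges of the random cyclic ordering induced by $\Pi$. Since $\NN_r(0,\Gamma_2)$ is determined by its one-dimensional marginals, the Cram\'er--Wold device reduces everything to the scalar statistic $n^{-1/2}\sum_{i=1}^n w_{\Pi(i),\Pi(i+1)}$ with $w_{kl} := \theta^\top a_{kl}$ for an arbitrary fixed $\theta \in \R^r$, the target being $\NN(0,\theta^\top\Gamma_2\theta)$. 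To handle the ``in probability'' mode of convergence, I would invoke the subsequence principle: in order to show $d_{\LL}(\cdot,\cdot) \to_p 0$ it suffices that along every subsequence there is a further subsequence on which the conditional weak convergence holds almost surely. Along such a subsequence the data-dependent quantities entering a combinatorial central limit theorem converge to their deterministic limits, and a path-by-path (deterministic) combinatorial CLT applies.

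The two moment computations are routine and use the degeneracy of $f_2$. Because $(\Pi(i),\Pi(i+1))$ is a uniformly random ordered pair of distinct indices, the conditional mean is
\[
	n^{-1/2}\,\Ex_\Pi\Bigl( \textstyle\sum_{i=1}^n w_{\Pi(i),\Pi(i+1)} \Bigr)
	\ = \ n^{-1/2}\,\frac{1}{n-1} \sum_{k\ne l} w_{kl}
	\ = \ \sqrt{n}\,\theta^\top M_n ,
\]
which tends to $0$ in probability since $\Var(M_n) = O(n^{-2})$ by \eqref{ineq:Var.U}. For the conditional variance, the usual bookkeeping splits pairs of edges into those sharing a vertex and those that are disjoint; the diagonal contribution equals $\frac{1}{n(n-1)}\sum_{k\ne l} w_{kl}^2$, which converges in probability to $\Ex\bigl[(\theta^\top f_2(X_1,X_2))^2\bigr] = \theta^\top\Gamma_2\theta$ by the law of large numbers for the nondegenerate kernel $w_{kl}^2$. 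The adjacent and disjoint contributions are negligible because the conditional row sums $\sum_{l} a_{kl}$ are only $O_p(\sqrt{n})$ (they are sums of the conditionally centered terms $f_2(X_k,X_l)$, with $\Ex[f_2(x,X_2)] = 0$) and the grand sum $\sum_{k\ne l} a_{kl} = n(n-1)M_n$ is $O_p(n)$; one checks that both types of cross terms, once divided by $n$, vanish in probability.

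The genuinely delicate ingredient is a combinatorial central limit theorem for $\sum_{i=1}^n w_{\Pi(i),\Pi(i+1)}$ itself. This is where the advertised variation of Hoeffding's (1951) theorem enters: the dependence among the summands is more intricate than in the classical statistic $\sum_i c_{i,\Pi(i)}$, since consecutive edges of the cycle share a vertex and thus form a chain of overlapping dependencies wrapping around the whole cycle. I would prove this CLT separately (in the appendix), e.g.\ by a blocking argument combined with exchangeable-pair or moment estimates tailored to the cyclic structure, and arrange its hypotheses so that they are exactly what the previous paragraph verifies: asymptotically vanishing normalized row sums, a convergent normalized sum of squares $\frac{1}{n^2}\sum_{k\ne l} w_{kl}^2$, and a Lindeberg-type negligibility condition $\frac{1}{n^2}\sum_{k\ne l} w_{kl}^2\,1\{|w_{kl}|>\varepsilon\sqrt{n}\} \to 0$. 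The last condition holds in probability under the standing assumption $\Ex(\|f(X_1-X_2)\|^2) < \infty$, since its expectation equals $\frac{n-1}{n}\,\Ex\bigl[w_{12}^2\,1\{|w_{12}|>\varepsilon\sqrt{n}\}\bigr] \to 0$ by dominated convergence, so it converges to $0$ almost surely along a suitable subsequence. Establishing this combinatorial CLT under only a second moment condition is the main obstacle; granting it, feeding the verified mean, variance and Lindeberg conditions into it yields the conditional convergence to $\NN(0,\theta^\top\Gamma_2\theta)$, and Cram\'er--Wold together with the subsequence argument completes the proof.
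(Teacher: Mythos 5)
Your overall architecture --- Cram\'er--Wold reduction to the scalar case, conditional moment computations, and an appeal to a Hoeffding-type combinatorial CLT whose hypotheses are a normalized sum of squares converging to $\theta^\top\Gamma_2\theta$, vanishing row sums, and a Lindeberg-type condition --- matches the paper's. But there is a genuine gap exactly where you acknowledge ``the main obstacle'': you never establish the central limit theorem for $\sum_{i=1}^n w_{\Pi(i),\Pi(i+1)}$, i.e.\ for a sum of array entries along a uniformly random Hamiltonian cycle. Hoeffding's (1951) theorem as it stands applies to $\sum_i c_{i,\sigma(i)}$ with $\sigma$ uniform on \emph{all} of $\SS_n$, whereas your statistic is $\sum_i A_{i,\sigma^*(i)}$ with $\sigma^*$ uniform on the set of single $n$-cycles; the suggestion to handle the chain of overlapping edge dependencies ``by a blocking argument combined with exchangeable-pair or moment estimates'' is a placeholder, not an argument, and this step carries the real content of the proposition.

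The paper closes this gap with a coupling rather than a new CLT. Starting from the uniform permutation $\pi$, Feller's cycle construction produces simultaneously a uniformly distributed $\sigma \in \SS_n$ and the single cycle $\sigma^* = (\pi(1),\ldots,\pi(n))_{\rm c}$, and these two permutations disagree only at the ``record'' indices, whose expected number is at most $1 + \log(n)$. Since each entry $A_{i,j} = n^{-1/2} 1_{[i \ne j]} f_2(X_i,X_j)$ has $L^1$-norm of order $n^{-1/2}$ and the disagreement set depends only on $\pi$, which is independent of the data, the difference $\sum_i A_{i,\sigma^*(i)} - \sum_i A_{i,\sigma(i)}$ is of order $(1+\log(n)) n^{-1/2}$ in $L^1$ and hence negligible. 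After this reduction the classical Hoeffding theorem applies directly, and the three conditions you list (which you verify essentially correctly, in the same way the paper does via truncation and variance bounds) suffice. Without this coupling, or a fully worked-out CLT for the cyclic statistic, your proof does not go through.
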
 

\begin{proof}[\bf Proof of Proposition~\ref{prop:asymptotics.incomplete.randomized.U}]
By means of the Cram\'{e}r--Wold device, it suffices to consider the case $r = 1$. Then the random variable $\sqrt{n} \, \tilde{M}_{n,1}$ can be written as $\sum_{i=1}^n A_{\Pi(i),\Pi(i+1)}$ with the random matrix
\[
	A = A(X_1,\ldots,X_n) \ := \ \bigl( n^{-1/2} 1_{[i \ne j]} f_2(X_i,X_j) \bigr)_{i,j=1}^n \in \R^{n\times n}_{\rm sym} .
\]
As explained in the appendix, there exist permutations $B = B(\cdot \,|\, \Pi)$ and $B^* = B^*(\cdot \,|\, \Pi)$ such that
\[
	\sum_{i=1}^n A_{\Pi(i),\Pi(i+1)} \ = \ \sum_{i=1}^n A_{i,B^*(i)} ,
\]
while $B$ is uniformly distributed on the set of all permutations of $\{1,\ldots,n\}$, and
\[
	\Ex \bigl( \# \bigl\{ i \in \{1,\ldots,n\} : B(i) \ne B^*(i) \bigr\} \bigr)
	\ \le \ 1 + \log(n) .
\]
Consequently,
\[
	\sqrt{n} \tilde{M}_{n,1} \ = \ \sum_{i=1}^n A_{i,B(i)} + R_n ,
\]
where $R_n := \sum_{i=1}^n (A_{i,B^*(i)} - A_{i,B(i)})$ satisfies
\[
	\Ex |R_n| \le \ 2(1 + \log(n)) n^{-1/2} \Ex |f_2(X_1,X_2)|
	\ \to \ 0 .
\]
Hence, it suffices to show that the conditional distribution of $\sum_{i=1}^n A_{i,B(i)}$, given $(X_i)_{i=1}^n$, converges weakly in probability to $\NN(0,\Gamma_2)$. Distributions of this type have been investigated by \cite{Hoeffding_1951}. It follows from Hoeffding's results and elementary inequalities presented in Section~\ref{subsec:Lindeberg} that it suffices to verify the following three properties of the random symmetric matrices $A = A(X_1,\ldots,X_n)$:
\begin{align}
\label{eq:Hoeffding.1}
	\Ex \Bigl| n^{-1} \sum_{i,j=1}^n A_{i,j}^2 - \Gamma_2 \Bigr| \
	&\to \ 0 , \\
\label{eq:Hoeffding.2}
	\Ex \Bigl( \sum_{i=1}^n \bar{A}_i^2 \Bigr) \
	&\to \ 0 , \\
\label{eq:Hoeffding.3}
	\Ex \Bigl( n^{-1} \sum_{i,j=1}^n A_{i,j}^2 \min(|A_{i,j}|,1) \Bigr) \
	&\to \ 0 ,
\end{align}
where $\bar{A}_i := n^{-1} \sum_{j=1}^n A_{i,j}$.

For an arbitrary threshold $c > 0$, we split $f_2(x,y)^2$ into the bounded function $g_c(x,y) := f_2(x,y)^2 1_{[f_2(x,y)^2 \le c]}$ and the remainder $h_c(x,y) := f_2(x,y)^2 1_{[f_2(x,y)^2 > c]}$. Then the left-hand side of \eqref{eq:Hoeffding.1} equals
\begin{align*}
	\Ex \Bigl| & n^{-2} \sum_{i,j=1}^n 1_{[i \ne j]} f_2(X_i,X_j)^2 - \Gamma_2 \Bigr| \\
	&\le \ n^{-1} \Gamma_2 + 2 \Ex h_c(X_1,X_2)
		+ n^{-2} \Ex \Bigl| \sum_{i,j=1}^n \bigl( g_c(X_i,X_j) - \Ex g_c(X_1,X_2) \bigr) \Bigr| \\
	&\le \ n^{-1} \Gamma_2 + 2 \Ex h_c(X_1,X_2)
		+ n^{-2} \Var \Bigl( \sum_{i,j=1}^n g_c(X_i,X_j) \Bigr)^{1/2} \\
	&\le \ n^{-1} \Gamma_2
		+ 2 \Ex h_c(X_1,X_2)
		+ c n^{-1/2} \\
	&\to \ 2 \Ex h_c(X_1,X_2) .
\end{align*}
The last inequality follows from the facts that
\[
	\Cov \bigl( g_c(X_i,X_j), g_c(X_k,X_\ell) \bigr) \
	\begin{cases}
		= \ 0
			& \text{if} \ \{i,j\} \cap \{k,\ell\} = \emptyset , \\
		\le \ c^2/4
			& \text{else} ,
	\end{cases}
\]
and that the number of quadruples $(i,j,k,\ell)$ with $\{i,j\} \cap \{k,\ell\} \ne \emptyset$ is smaller than $4n^3$. Since by dominated convergence, $\Ex h_c(X_1,X_2) \to 0$ as $c \to \infty$, Condition~\eqref{eq:Hoeffding.1} is satisfied.

The left-hand side of \eqref{eq:Hoeffding.2} equals $n \Ex (\bar{A}_1^2)$, and $\bar{A}_1$ is the sum of the uncorrelated, centered random variables $f_2(X_1,X_j)$, $2 \le j \le n$, times $n^{-3/2}$. Consequently,
\[
	n \Ex(\bar{A}_1^2) \ \le \ n^{-1} \Ex \bigl( f_2(X_1,X_2)^2 \bigr)
	\ \to \ 0 .
\]

Finally, the left-hand side of \eqref{eq:Hoeffding.3} is not larger than
\[
	\Ex \bigl( f_2(X_1,X_2)^2 \min\{n^{-1/2} |f_2(X_1,X_2)|, 1\} \bigr)
	\ \to \ 0
\]
by dominated convergence.
\end{proof}

\begin{proof}[\bf Proof of Theorem~\ref{thm:asymptotics.2}]
Since $(\hat{P}_n, \hat{Q}_{n,1}^{(\ell)}, M_{n,1}^{(\ell)})$ has the same distribution as $(\hat{P}_n, \hat{Q}_{n,1}, M_{n,1})$, the first assertion is a direct consequence of Theorem~\ref{thm:asymptotics} with $d = 1$. The second part is a consequence of the central limit theorem, applied to $\int_{\R^q} H_1 \, d\hat{P}_n$, and Proposition~\ref{prop:asymptotics.incomplete.randomized.U}, applied to $\sqrt{n} \, M_{n,1}^{(\ell)}$. The final statement is a consequence of the first and second part and the continuous mapping theorem.
\end{proof}

\appendix
\section{Auxiliary results}

\subsection{A particular coupling of random permutations}

\paragraph{Preparations.}
For an integer $n \ge 1$, let $\SS_n$ be the set of all permutations of $\langle n\rangle := \{1,2,\ldots,n\}$. A cycle in $\SS_n$ is a permutation $\sigma \in \SS_n$ such that for $m \ge 1$ pairwise different points $a_1,\ldots,a_m \in \langle n\rangle$,
\[
	a_1 \ \mapsto \ a_2 \mapsto \ \cdots \ \mapsto \ a_m \ \mapsto \ a_1 ,
\]
while $\sigma(i) = i$ for $i \in \langle n\rangle \setminus \{a_1,\ldots,a_m\}$. (In case of $m = 1$, $\sigma(i) = i$ for all $i \in \langle n\rangle$.) We write
\[
	\sigma \ = \ (a_1,\ldots,a_m)_{\rm c}
\]
for this mapping and note that it has $m$ equivalent representations
\[
	\sigma \
	= \ (a_1,\ldots,a_m)_{\rm c} \
	= \ (a_2,\ldots,a_m,a_1)_{\rm c} \
	= \ \cdots \
	= \ (a_m,a_1,\ldots,a_{m-1})_{\rm c} .
\]

Any permutation $\sigma \in \SS_n$ can be written as
\[
	\sigma \ = \ (a_{11},\ldots,a_{1m(1)})_{\rm c} \circ \cdots \circ (a_{k1},\ldots,a_{km(k)})_{\rm c} ,
\]
where the sets $\{a_{j1},\ldots,a_{jm(j)}\}$, $1 \le j \le k$, form a partition of $\langle n\rangle$. Note that the cycles $(a_{j1},\ldots,a_{jm(j)})_{\rm c}$, $1\le j\le m$, commute. This representation of $\sigma$ as a combination of cycles is unique if we require, for instance, that
\[
	a_{jm(j)} \ = \ \min\{a_{j1},\ldots,a_{jm(j)}\}
	\quad\text{for} \ 1 \le j \le k
\]
and
\[
	a_{1m(1)} \ < \ \cdots \ < \ a_{km(k)} .
\]

In what follows, let $\SS_n^*$ be the set of all permutations $\sigma \in \SS_n$ consisting of just one cycle, i.e.
\[
	\sigma \ = \ (a_1,a_2,\ldots,a_n)_{\rm c}
\]
with pairwise different numbers $a_1, a_2, \ldots, a_n \in \langle n\rangle$.

\paragraph{The coupling.}
The standardized cycle representation of $\sigma \in \SS_n$ gives rise to a particular mapping $\SS_n \ni \pi \mapsto (\sigma,\sigma^*) \in \SS_n \times \SS_n^*$ such that $\pi \mapsto \sigma$ is bijective. For fixed $\pi \in \SS_n$ and any index $i \in \langle n\rangle$ let
\[
	M_i \ := \ \langle n\rangle \setminus \{\pi(s) : 1 \le s < i\} ,
\]
i.e.\ $\langle n\rangle = M_1 \supset M_2 \supset \cdots \supset M_n = \{\pi(n)\}$, and $\# M(i) = n+1-i$. Let $1 \le t_1 < t_2 < \cdots < t_k = n$ be those indices $i$ such that $\pi(i) = \min(M_i)$. Then
\[
	\sigma \ := \ \bigl( \pi(1), \ldots, \pi(t_1) \bigr)_{\rm c}
		\circ \bigl( \pi(t_1+1),\ldots,\pi(t_2) \bigr)_{\rm c}
		\circ \cdots \circ \bigl( \pi(t_{k-1}+1), \ldots, \pi(t_k) \bigr)_{\rm c}
\]
defines a permutation of $\langle n\rangle$ with standardized cycle representation. This is essentially the construction used by \cite{Feller_1945} to investigate the number of cycles of a random permuation. Moreover,
\[
	\sigma^* \ := \ \bigl( \pi(1), \pi(2), \ldots, \pi(n) \bigr)_{\rm c}
\]
defines a permutation in $\SS_n^*$ such that
\[
	\bigl\{ i \in \langle n\rangle : \sigma(i) \ne \sigma^*(i) \bigr\}
	\ = \ \begin{cases}
		\emptyset & \text{if} \ k = 1 , \\
		\{t_1,\ldots,t_k\} & \text{if} \ k \ge 2 .
	\end{cases}
\]

Suppose that $\pi$ is a random permutation with uniform distribution on $\SS_n$. Then $\sigma$ is a random permutation with uniform distribution on $\SS_n$ too, because $\pi \mapsto \sigma$ is a bijection. Since the conditional distribution of $\pi(i)$, given $(\pi(s))_{1 \le s < i}$, is the uniform distribution on $M_i$, the random variables
\[
	Y_i \ := \ 1_{[\pi(i) = \min(M_i)]}, \quad i \in \langle n\rangle,
\]
are stochastically independent Bernoulli random variables with $\Pr(Y_i = 1) = (n+1-i)^{-1} = 1 - \Pr(Y_i = 0)$. Consequently,
\[
	\Ex \bigl( \# \bigl\{ i \in \langle n\rangle : \sigma(i) \ne \sigma^*(i) \bigr\} \bigr)
	\ \le \ \sum_{i=1}^n (n+1-i)^{-1}
	\ = \ 1 + \sum_{j=2}^n j^{-1}
	\ \le \ 1 + \log(n) ,
\]
because $j^{-1} \le \int_{j-1}^j x^{-1} \, dx = \log(j) - \log(j-1)$ for $2 \le j \le n$.

\subsection{Some inequalities related to Lindeberg type conditions}
\label{subsec:Lindeberg}

In connection with Gaussian approximations and Stein's method, see \cite{Stein_1986} or \cite{Barbour_Chen_2005}, the quantity
\[
	L(X) \ := \ \Ex \bigl( X^2 \min(|X|,1) \bigr)
\]
for a square-integrable random variable $X$ plays an important role. Elementary considerations show that
\[
	h(x) \ \le \ x^2 \min(|x|,1) \ \le \ \sqrt{2} \, h(x)
	\quad\text{with}\quad
	h(x) \ := \ \frac{|x|^3}{\sqrt{1 + x^2}}
\]
for arbitrary $x \in \R$. Moreover, $h : \R \to [0,\infty)$ is an even, convex function such that $h(2x) \le 8 h(x)$. Consequently, for arbitrary $x,y \in \R$, Jensen's inequality implies that
\begin{align*}
	(x + y)^2 \min (|x + y|, 1) \
	&\le \ \sqrt{2} \, \Ex h(x + y) \\
	&\le \ 2^{-1/2} \bigl( h(2x) + h(2y) \bigr) \\
	&\le \ \sqrt{32} \, \Ex h(x) + \sqrt{32} \, \Ex h(y) \\
	&\le \ 6 x^2 \min(|x|, 1) + 6 y^2 \min(|y|,1)
		\ \le \ 6 x^2 \min(|x|,1) + 6 y^2 .
\end{align*}

For a symmetric matrix $A \in \R^{n\times n}$, we define its row means $\bar{A}_i := n^{-1} \sum_{j=1}^n A_{ij}$ and its overall mean $\bar{A} := n^{-2} \sum_{i,j=1}^n A_{ij}$. Let $\tilde{A} := (A_{ij} - \bar{A}_i - \bar{A}_j + \bar{A})_{i,j=1}^n$. Then elementary calculations and the previous inequalities reveal that
\[
	0 \
	\le \ n^{-1} \sum_{i,j=1}^n A_{ij}^2 - n^{-1} \sum_{i,j=1}^n \tilde{A}_{ij}^2 \
	\le \ 2 \sum_{i=1}^n \bar{A}_i^2
\]
and
\[
	n^{-1} \sum_{i,j=1}^n \tilde{A}_{ij}^2 \min(|\tilde{A}_{ij}|,1)
	\ \le \ 6 n^{-1} \sum_{i,j=1}^n A_{ij}^2 \min(|A_{ij}|,1)
		+ 12 \sum_{i=1}^n \bar{A}_i^2 .
\]

\paragraph{Acknowledgement.}
We thank Sara Taskinen for stimulating discussions. Constructive comments of three referees are gratefully acknowledged.



\end{document}